\numberwithin{equation}{section}
\numberwithin{figure}{section}
\theoremstyle{plain}
\newtheorem{thm}{\protect\theoremname}[section]
  \theoremstyle{plain}
  \newtheorem{prop}[thm]{\protect\propositionname}
  \theoremstyle{plain}
  \newtheorem{lem}[thm]{\protect\lemmaname}
  \theoremstyle{remark}
  \newtheorem{rem}[thm]{\protect\remarkname}
  \providecommand{\lemmaname}{Lemma}
  \providecommand{\propositionname}{Proposition}
  \providecommand{\remarkname}{Remark}
\providecommand{\theoremname}{Theorem}
\begin{document}

\lhead{Cocommutative elements in quantum matrices}

\rhead{ Szabolcs Mészáros}

\chead{}

\title{Cocommutative elements form a maximal commutative subalgebra in quantum
matrices}

\author{Szabolcs Mészáros}
\begin{abstract}
In this paper we prove that the subalgebras of cocommutative elements
in the quantized coordinate rings of $M_{n}$, $GL_{n}$ and $SL_{n}$
are the centralizers of the trace $x_{1,1}+\dots+x_{n,n}$ in each
algebra, for $q\in\mathbb{C}^{\times}$ being not a root of unity.
In particular, it is not only a commutative subalgebra as it was known
before, but it is a maximal one. 
\end{abstract}

\maketitle

\section{Introduction}

In \cite{DL1} M. Domokos and T. Lenagan determined generators for
the subalgebra of cocommutative elements in the quantized coordinate
ring of the general linear group $\mathcal{O}_{q}\big(GL_{n}(\mathbb{C})\big)$
with $q$ being not a root of unity. Their proof was based on the
observation that these are exactly the invariants of some quantum
analogue of the conjugation action of $GL_{n}(\mathbb{C})$ on $\mathcal{O}\big(GL_{n}(\mathbb{C})\big)$
which may be called modified adjoint coaction. It turned out that
this ring of invariants is basically the same as in the classical
setting, namely it is a polynomial ring generated by the quantum versions
of the trace functions. In \cite{DL2} they proved that it is a more
general phenomenon: the subalgebra of cocommutative elements $\mathcal{O}_{q}(G)^{\mathrm{coc}}$
for the quantized coordinate ring $\mathcal{O}_{q}(G)$ of a simply-connected,
simple Lie group $G$ is always isomorphic to its classical counterpart
$\mathcal{O}(G)^{\mathrm{coc}}$, as a consequence of the Peter-Weyl
decomposition for quantized coordinate rings (see \cite{H,MNY}).
This way, they obtained generators for the $\mathcal{O}_{q}(G)^{\mathrm{coc}}$
subalgebras and for the related FRT-bialgebras. In the present paper,
however, we will discuss a property of $\mathcal{O}_{q}\big(GL_{n}(\mathbb{C})\big)$
that does not hold if $q=1$ or if it is a root of unity.

The correspondence between $\mathcal{O}_{q}(G)^{\mathrm{coc}}$ and
$\mathcal{O}(G)^{\mathrm{coc}}$ does not stop on the level of their
algebra structure. In the case of $G=GL_{n}(\mathbb{C})$, Aizenbud
and Yacobi in \cite{AY} proved the quantum analog of Kostant's theorem
stating that $\mathcal{O}_{q}\big(M_{n}(\mathbb{C})\big)$ is a free
module over the ring of invariants under the adjoint coaction of $\mathcal{O}_{q}\big(GL_{n}(\mathbb{C})\big)$,
provided that $q$ is not a root of unity. Hence, the description
of $\mathcal{O}_{q}\big(GL_{n}(\mathbb{C})\big)$ as a module over
$\mathcal{O}_{q}\big(GL_{n}(\mathbb{C})\big)^{\mathrm{coc}}$ is available.
The classical theorem of Kostant can be interpreted as the $q=1$
case of this result. These type of statements (see \cite{B,JL}) can
also be used as tools to obtain other results, as in \cite{Y1} the
Joseph localizations being free over certain subalgebras is proved
and applied to establish numerous results, including a description
of the maximum spectra of $\mathcal{O}_{q}(G)$.

In this paper, we further investigate the relation of the subalgebra
$\mathcal{O}_{q}\big(GL_{n}(\mathbb{C})\big)^{\mathrm{coc}}$ to the
whole algebra $\mathcal{O}_{q}\big(GL_{n}(\mathbb{C})\big)$ when
$q$ is not a root of unity. Namely, we prove the following theorem: 
\begin{thm}
For $n\in\mathbb{N}^{+}$ and $q\in\mathbb{C}^{\times}$ not a root
of unity, the subalgebra of cocommutative elements is a maximal commutative
$\mathbb{C}$-subalgebra in $\mathcal{O}_{q}\big(M_{n}(\mathbb{C})\big)$,
$\mathcal{O}_{q}\big(GL_{n}(\mathbb{C})\big)$ and $\mathcal{O}_{q}\big(SL_{n}(\mathbb{C})\big)$.\label{thm:The-subalgebra}
\end{thm}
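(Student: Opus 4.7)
The plan is to prove the theorem by establishing the centralizer description signalled in the abstract: the cocommutative subalgebra $\mathcal{O}_q\big(M_n(\mathbb{C})\big)^{\mathrm{coc}}$ coincides with the centralizer $C(t)$ of the quantum trace $t=x_{1,1}+\cdots+x_{n,n}$, and analogously in the $GL_n$ and $SL_n$ cases. Once this is in place, maximality is automatic: any element commuting with every cocommutative element commutes in particular with $t$, hence lies in $C(t)=\mathcal{O}_q(\cdots)^{\mathrm{coc}}$. The inclusion $\mathcal{O}_q(M_n)^{\mathrm{coc}}\subseteq C(t)$ is essentially free from the Domokos--Lenagan results recalled in the introduction, because $t$ is cocommutative and the cocommutative subalgebra is already known to be commutative.

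The substantive content is the reverse inclusion $C(t)\subseteq \mathcal{O}_q\big(M_n(\mathbb{C})\big)^{\mathrm{coc}}$. To attack it, I would exploit the bi-weight decomposition $\mathcal{O}_q\big(M_n(\mathbb{C})\big)=\bigoplus_{\mu,\nu\in\mathbb{N}^{n}}A_{\mu,\nu}$ induced by the left and right quantum torus coactions, under which $x_{i,j}\in A_{e_i,e_j}$. The operator $[t,-]$ respects this grading in the controlled fashion $[x_{k,k},A_{\mu,\nu}]\subseteq A_{\mu+e_k,\nu+e_k}$, while the adjoint coinvariants are known to live on the diagonal part $\bigoplus_\mu A_{\mu,\mu}$. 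Fixing a normally-ordered PBW basis of $\mathcal{O}_q\big(M_n(\mathbb{C})\big)$ and computing $[x_{k,k},m]$ on basis monomials $m$ produces explicit linear equations whose solution space I would match, via an induction on a suitable monomial order, with the adjoint coinvariants. Crucially, non-diagonal bi-weight contributions should be forced out precisely by non-vanishing quantum integers $[m]_q$ appearing as coefficients when $x_{k,k}$ is moved past a PBW monomial; this is where the hypothesis that $q$ is not a root of unity enters, and it also fits the introduction's remark that the statement fails at $q=1$ and at roots of unity.

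Finally, I would transfer maximality from $\mathcal{O}_q\big(M_n(\mathbb{C})\big)$ to $\mathcal{O}_q\big(GL_n(\mathbb{C})\big)$ and $\mathcal{O}_q\big(SL_n(\mathbb{C})\big)$ using that the quantum determinant $\det_q$ is central and cocommutative, so that both the centralizer construction and the passage to cocommutative elements behave well under localization at $\det_q$ and under the quotient by $\det_q-1$. The main obstacle is the bi-weight analysis of $C(t)$: tracking which cancellations are possible between different bi-weight pieces of $a$ mapped by $[t,-]$ into the same target component, ruling out all non-diagonal solutions, and identifying the diagonal solutions with the explicit generators of $\mathcal{O}_q\big(M_n(\mathbb{C})\big)^{\mathrm{coc}}$ (the quantum power sums, equivalently the quantum elementary symmetric functions). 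I expect nearly all the technical work of the paper to concentrate in this combinatorial step.
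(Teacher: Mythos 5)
Your global reduction is exactly the paper's: maximality of the cocommutative subalgebra follows once one shows it equals the centralizer $C(\sigma_1)$ of the quantum trace, the inclusion $\mathcal{O}_q\big(M_n(\mathbb{C})\big)^{\mathrm{coc}}\subseteq C(\sigma_1)$ is free from Domokos--Lenagan, and the passage to $GL_n$ and $SL_n$ uses centrality and group-likeness of $\det_q$ (the paper's Proposition \ref{prop:implications}; note the $SL_n$ direction needs a small extra lemma that the ideal $(\det_q-1)$ meets no homogeneous component, since lifting along the quotient is not entirely formal). The gap is in the only substantive step, $C(\sigma_1)\subseteq\mathcal{O}_q\big(M_n(\mathbb{C})\big)^{\mathrm{coc}}$, where your bi-weight plan stops precisely at the two points that carry all the difficulty. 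First, $\sigma_1$ is not bi-weight homogeneous: $[\sigma_1,-]$ sends $A_{\mu,\nu}$ into $\bigoplus_k A_{\mu+e_k,\nu+e_k}$, so the equation $[\sigma_1,a]=0$ couples the components $a_{\lambda-e_k,\rho-e_k}$ for all $k$ in each target component $A_{\lambda,\rho}$; cancellations between distinct bi-weight pieces of $a$ are exactly what must be excluded, and the ``non-vanishing quantum integer'' coefficients alone do not do this. Second, even granting diagonality, $\bigoplus_\mu A_{\mu,\mu}$ is vastly larger than the cocommutative subalgebra (already for $n=2$ it contains $x_{1,2}x_{2,1}$ and has Gelfand--Kirillov dimension $3$ versus $2$ for $\mathbb{C}[\sigma_1,\sigma_2]$), so ``identifying the diagonal solutions with the generators'' is itself most of the theorem. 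You acknowledge both issues but do not resolve them, so as written this is a proof plan rather than a proof.

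For comparison, the paper avoids a direct combinatorial analysis of $[\sigma_1,-]$ in all of $A_n=\mathcal{O}_q\big(M_n(\mathbb{C})\big)$ by inducting on $n$: it filters $A_n$ by the degree in $x_{1,1}$, identifies the kernel of the associated graded commutator map $\mathrm{gr}(\mathrm{ad}\,\sigma_1)$ with $A_{n-1}[t]$ (here the not-a-root-of-unity hypothesis enters through the scalars $1-q^{-c(m)}$, much as you anticipate), and composes with the quotient $\varphi:A_n\to B_{2,n}\cong A_{n-1}[t]$ killing the off-diagonal entries of the first row and column. The induction hypothesis identifies $C_{A_{n-1}}(\sigma_1)$, a five-lemma argument upgrades injectivity on associated graded pieces to injectivity of $\eta=\delta\circ\varphi$ on $C(\sigma_1)$, and a symmetry trick using the isomorphism $\mathcal{O}_q(M_n)\cong\mathcal{O}_{q^{-1}}(M_n)$, $x_{i,j}\mapsto x'_{n+1-i,n+1-j}$, shows the image lands in $\mathbb{C}[t_1,\dots,t_n]^{S_n}$ (this is why the base case is $n=2$, done by explicit computation in $\mathcal{O}_q(SL_2)$). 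If you want to salvage your route, you would need to supply an argument of comparable strength for the two missing steps; some inductive device that reduces the number of variables seems hard to avoid.
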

By Theorem 6.1 in \cite{DL1}, these subalgebras are determined by
certain pairwise commuting sums of (principal) quantum minors (denoted
by $\sigma_{i}$, $i=1,\dots,n$) that are defined in Section \ref{sec:Prerequisites}.
It means that it is enough to prove that the intersection of the centralizers
of these explicit commuting generators is not bigger than their generated
subalgebra. So we prove the following (stronger) statement:
\begin{thm}
For $n\in\mathbb{N}^{+}$ and $q\in\mathbb{C}^{\times}$ not a root
of unity, the centralizer of $\sigma_{1}=x_{11}+\dots+x_{nn}$ in
$\mathcal{O}_{q}\big(M_{n}(\mathbb{C})\big)$ (resp. $\sigma_{1}\in\mathcal{O}_{q}\big(GL_{n}(\mathbb{C})\big)$
and $\overline{\sigma}_{1}\in\mathcal{O}_{q}\big(SL_{n}(\mathbb{C})\big)$)
as a unital $\mathbb{C}$-subalgebra is generated by
\begin{itemize}
\item $\sigma_{1},\dots,\sigma_{n-1},\sigma_{n}$ in the case of $\mathcal{O}_{q}\big(M_{n}(\mathbb{C})\big)$,
\item $\sigma_{1},\dots\sigma_{n-1},\sigma_{n},\sigma_{n}^{-1}$ in the
case of $\mathcal{O}_{q}\big(GL_{n}(\mathbb{C})\big)$, and 
\item $\overline{\sigma}_{1},\dots\overline{\sigma}_{n-1}$ in the case
of $\mathcal{O}_{q}\big(SL_{n}(\mathbb{C})\big)$. \label{thm:The-centralizer-of-the-trace}
\end{itemize}
\end{thm}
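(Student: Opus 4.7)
The plan is to reduce the three cases to the $\mathcal{O}_q\big(M_n(\mathbb{C})\big)$ case and then carry out a leading-term argument on a PBW basis, exploiting the explicit form of the $q$-commutation relations between the diagonal generators $x_{11}, \dots, x_{nn}$ and an arbitrary basis monomial.

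For the reductions, I would use that the quantum determinant $\sigma_n$ is central in $\mathcal{O}_q\big(M_n(\mathbb{C})\big)$ and already commutes with $\sigma_1$. Then $\mathcal{O}_q\big(GL_n(\mathbb{C})\big) = \mathcal{O}_q\big(M_n(\mathbb{C})\big)[\sigma_n^{-1}]$ is a central localization, so its centralizer of $\sigma_1$ is obtained from the centralizer in $\mathcal{O}_q\big(M_n(\mathbb{C})\big)$ by adjoining $\sigma_n^{-1}$. The $SL_n$ case is more delicate: I would use the $\mathbb{Z}$-grading of $\mathcal{O}_q\big(M_n(\mathbb{C})\big)$ by total degree (under which $\sigma_n$ is homogeneous of degree $n$) to lift a centralizing element of $\mathcal{O}_q\big(SL_n(\mathbb{C})\big) = \mathcal{O}_q\big(M_n(\mathbb{C})\big)/(\sigma_n - 1)$ and adjust it by multiples of $\sigma_n - 1$ degree-by-degree, using that the $M_n$-statement has already been established.

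For the core $M_n$ case, I would equip $\mathcal{O}_q\big(M_n(\mathbb{C})\big)$ with the standard PBW basis of ordered monomials in the $x_{ij}$ under a suitable term order. A direct computation from the Manin relations shows $x_{kk}\, x_{ij} \equiv q^{\,d_k(i,j)}\, x_{ij}\, x_{kk} \pmod{\text{strictly lower order}}$, with $d_k(i,j) \in \{-1, 0, +1\}$ determined by whether $k$ matches the row or column of $(i,j)$ and on which side. For a monomial $m$ this multiplies to $[x_{kk}, m] \equiv (q^{C_k(m)} - 1)\, m\, x_{kk}$ modulo lower terms, where $C_k(m)$ is the corresponding signed count over the non-diagonal factors of $m$ lying in row or column $k$. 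Summing over $k$, every leading monomial $m$ of an $f \in Z(\sigma_1)$ must satisfy $q^{C_k(m)} = 1$, hence $C_k(m) = 0$ for every $k$ (since $q$ is not a root of unity).

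The technical heart, and principal obstacle, is then a combinatorial matching statement: for each such $m$ with $C_k(m) = 0$ for all $k$, one must exhibit an explicit polynomial $p \in \mathbb{C}[\sigma_1, \dots, \sigma_n]$ whose leading monomial under the chosen order equals a nonzero scalar multiple of $m$. Granted this, subtracting a suitable multiple of $p$ from $f$ strictly decreases the leading term, and induction on the term order yields $f \in \mathbb{C}[\sigma_1, \dots, \sigma_n]$. The matching step requires a careful analysis of the leading monomials of products of the principal-minor sums $\sigma_i$, identifying them combinatorially with precisely the PBW monomials satisfying the balance condition $C_k = 0$; this is where the bulk of the work is expected to live.
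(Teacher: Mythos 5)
There is a genuine gap, and it sits exactly where you placed ``the bulk of the work'': the proposed matching statement is false. Your leading-order computation is essentially right, and the balance condition is a nice observation: each off-diagonal factor $x_{i,j}$ contributes $+1$ to $C_{\min(i,j)}$ and $-1$ to $C_{\max(i,j)}$, so $C_k(m)=0$ for all $k$ forces (by induction on $k$, starting from $k=1$ where no ``max'' contribution is possible) that $m$ contains \emph{no} off-diagonal factors at all, i.e.\ $m=x_{1,1}^{a_1}\cdots x_{n,n}^{a_n}$. But the balanced monomials of degree $d$ number $\binom{d+n-1}{n-1}$, whereas $\dim\mathbb{C}\langle\sigma_1,\dots,\sigma_n\rangle_d$ is the number of partitions of $d$ into at most $n$ parts; since a $k$-dimensional space has exactly $k$ distinct leading monomials under any term order, most balanced monomials (already $x_{2,2}^2$ for $n=2$, $d=2$) are \emph{not} leading monomials of anything in $\mathbb{C}\langle\sigma_1,\dots,\sigma_n\rangle$. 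So after one reduction step you are left with an element of $C(\sigma_1)$ whose leading monomial is diagonal but unmatched, and the induction on the term order stalls. The underlying reason is that the first-order (associated-graded) information about $\operatorname{ad}\sigma_1$ only yields the upper bound $\dim C(\sigma_1)_d\le\dim\mathbb{C}[t_1,\dots,t_n]_d$; cutting this down to $\dim\mathbb{C}[t_1,\dots,t_n]^{S_n}_d$ requires using the lower-order correction terms $(q-q^{-1})x_{i,k}x_{k,j}$ that your computation discards.

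For comparison, the paper runs the eigenvalue argument only with respect to the $x_{1,1}$-filtration, obtaining an embedding $C(\sigma_1)\hookrightarrow\mathbb{C}[t_1,\dots,t_n]$ via the quotient killing the off-diagonal entries of the first row and column; this is the honest content of the first-order argument. The missing second half --- showing the image lies in the symmetric polynomials --- is obtained by an induction on $n$ (the quotient $B_{2,n}\cong A_{n-1}[t]$ gives symmetry in $t_2,\dots,t_n$) combined with the isomorphism $\mathcal{O}_q(M_n)\cong\mathcal{O}_{q^{-1}}(M_n)$, $x_{i,j}\mapsto x'_{n+1-i,n+1-j}$, which gives symmetry in $t_1,\dots,t_{n-1}$; for $n\ge3$ these generate $S_n$, and the base case $n=2$ is done by hand. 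Your proposal has no counterpart to this step, and without one the strategy cannot close. Your reductions to the $M_n$ case ($GL_n$ by central localization, $SL_n$ by lifting homogeneously modulo $\det_q-1$) are sound and match the paper's Proposition \ref{prop:implications}.
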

It is important to note that, while the theorems in \cite{DL1,DL2}
are quantum analogues of theorems established in the commutative case
and they are also true if $q$ is a root of unity (see \cite{AZ}),
this result, however, has no direct commutative counterpart and also
fails if $q$ is a root of unity since then the algebras have large
center. 

In Ore extensions of polynomials rings or in lower Gelfand-Kirillov
dimension, it is not a rare phenomenon that a centralizer of an element
$a\in A$ is commutative but larger than $\mathbb{C}\langle a,Z(A)\rangle$,
see \cite{BS,RS}. The above investigation shows that it also occurs
in less regular situations for some very special elements in quantized
function algebras. 

As a consequence of Theorem \ref{thm:The-subalgebra}, we can get
other maximal commutative subalgebras by applying automorphisms. One
of these automorphic images is the invariants of the adjoint coaction,
as it is discussed in Remark \ref{rem:final remark automorphism}.
Moreover, by an analogous argument as we use in the proof of Theorem
\ref{thm:The-subalgebra}, it is possible to find maximal Poisson-commutative
subalgebras in the semi-classical limits. We will discuss these issues
in a subsequent paper.

The article is organized as follows: In the next section we introduce
the relevant notions and notations. In Section \ref{sec:main}, first
we prove Proposition \ref{prop:implications} stating that it is enough
to prove Theorem \ref{thm:The-centralizer-of-the-trace} for any of
the three algebras $\mathcal{O}_{q}\big(M_{n}(\mathbb{C})\big)$,
$\mathcal{O}_{q}\big(GL_{n}(\mathbb{C})\big)$ or $\mathcal{O}_{q}\big(SL_{n}(\mathbb{C})\big)$.
Then, in Section \ref{sec:Case-of n=00003D2} we discuss the proof
of case $n=2$ as a starting step of the induction used to prove Theorem
\ref{thm:The-centralizer-of-the-trace}. Finally, in Section \ref{sec:Proof}
we prove the induction step to complete the proof of the theorem.

\section{Preliminaries\label{sec:Prerequisites}}

\subsection{Quantized coordinate rings}

Assume that $n\in\mathbb{N}^{+}$ and $q\in\mathbb{C}^{\times}$ is
not a root of unity. Define $\mathcal{O}_{q}\big(M_{n}(\mathbb{C})\big)$,
the quantized coordinate ring of $n\times n$ matrices as the unital
$\mathbb{C}$-algebra generated by the $n^{2}$ generators $x_{i,j}$
for $1\leq i,j\leq n$ that are subject to the following relations:
\[
x_{i,j}x_{k,l}=\begin{cases}
x_{k,l}x_{i,j}+(q-q^{-1})x_{i,l}x_{k,j} & \textrm{if }i<k\textrm{ and }j<l\\
qx_{k,l}x_{i,j} & \textrm{if }(i=k\textrm{ and }j<l)\textrm{ or }(j=l\textrm{ and }i<k)\\
x_{k,l}x_{i,j} & \textrm{otherwise}
\end{cases}
\]
for all $1\leq i,j,k,l\leq n$. It turns out to be a finitely generated
$\mathbb{C}$-algebra which is a Noetherian domain. (For a detailed
exposition, see \cite{BG}.) Furthermore, it can be endowed with a
coalgebra structure by setting $\varepsilon(x_{i,j})=\delta_{i,j}$
and $\Delta(x_{i,j})=\sum_{k=1}^{n}x_{i,k}\otimes x_{k,j}$ turning
$\mathcal{O}_{q}\big(M_{n}(\mathbb{C})\big)$ into a bialgebra. 

Similarly, one can define the non-commutative deformations of the
coordinate rings of $GL_{n}$ and $SL_{n}$ using the quantum determinant

\[
\mathrm{det}_{q}:=\sum_{s\in S_{n}}(-q)^{\ell(s)}x_{1,s(1)}x_{2,s(2)}\dots x_{n,s(n)}
\]
where $\ell(\sigma)$ stands for the length of $\sigma$ in the Coxeter
group $S_{n}$. This definition can be ``legitimized'' by considering
the quantum exterior algebra $\Lambda_{q}(\mathbb{C}^{n})$ (see \cite{BG}).
Also its special behavior is justified by the fact that it is a group-like
element (i.e. $\Delta(\det_{q})=\det_{q}\otimes\det_{q}$) and it
generates the center of $\mathcal{O}_{q}\big(M_{n}(\mathbb{C})\big)$.
Then -- analogously to the classical case -- one defines 
\[
\mathcal{O}_{q}\big(SL_{n}(\mathbb{C})\big):=\mathcal{O}_{q}\big(M_{n}(\mathbb{C})\big)/(\mathrm{det}_{q}-1)\qquad\mathcal{O}_{q}\big(GL_{n}(\mathbb{C})\big):=\mathcal{O}_{q}\big(M_{n}(\mathbb{C})\big)\big[\mathrm{det}_{q}^{-1}\big]
\]
where inverting $\det_{q}$ cannot cause any problem because it is
central hence normal. The comultiplication and counit on $\mathcal{O}_{q}\big(M_{n}(\mathbb{C})\big)$
induce coalgebra structures on these algebras as well. In particular,
$\mathcal{O}_{q}\big(M_{n}(\mathbb{C})\big)$ is a subbialgebra of
$\mathcal{O}_{q}\big(GL_{n}(\mathbb{C})\big)$. In the case of $\mathcal{O}_{q}\big(SL_{n}(\mathbb{C})\big)$
and $\mathcal{O}_{q}\big(GL_{n}(\mathbb{C})\big)$ it is possible
to define antipodes that turn them into Hopf algebras.

\subsection{Quantum minors}

We call an element $a$ of a coalgebra $A$ cocommutative if $\Delta(a)=(\tau\circ\Delta)(a)$
where $\tau:A\otimes A\to A\otimes A$ is the flip $\tau(a\otimes b)=b\otimes a$.
Hence, we can define $A^{\mathrm{coc}}$, the subset of cocommutative
elements in $A$ which is necessarily a subalgebra if $A$ is a bialgebra.
For $A=\mathcal{O}_{q}\big(M_{n}(\mathbb{C})\big)$ the quantum determinant
is cocommutative since it is group-like. Moreover, by generalizing
the notion of $\det_{q}$, one can give an explicit description of
$A^{\mathrm{coc}}$ as it is proved in \cite{DL1}. For this purpose,
let us define the quantum minors for $I,J\subseteq\{1,\dots,n\}$,
$I=(i_{1},\dots,i_{t})$ and $J=(j_{1},\dots,j_{t})$ as
\[
[I\,|\, J]:=\sum_{s\in S_{t}}(-q)^{\ell(s)}x_{i_{1},j_{s(1)}}\dots x_{i_{t},j_{s(t)}}=\mathrm{det}_{q}\big(\mathbb{C}\langle x_{i,j}\ |\ i\in I,\ j\in J\rangle\big)\in A
\]
where $\mathbb{C}\langle\dots\rangle$ stands for the generated $\mathbb{C}$-subalgebra
and $\mathrm{det}_{q}\big(\mathbb{C}\langle x_{i,j}\ |\ i\in I,\ j\in J\rangle\big)$
denotes the quantum determinant of the subalgebra generated by $\{x_{i,j}\}_{i\in I,j\in J}$,
which can be identified with $\mathcal{O}_{q}\big(M_{t}(\mathbb{C})\big)$.
Now, one may compute 
\[
\Delta\big([I\,|\, J]\big)=\sum_{|K|=t}[I\,|\, K]\otimes[K\,|\, J]
\]
so we get cocommutative elements by taking
\[
\sigma_{i}=\sum_{|I|=i}[I\,|\, I]\in\mathcal{O}_{q}\big(M_{n}(\mathbb{C})\big)
\]
for all $1\leq i\leq n$. For $i=n$ we get $\det_{q}$ again and
in the case of $i=1$, it is $\sigma_{1}=x_{1,1}+x_{2,2}+\dots+x_{n,n}$. 

We will use $\sigma_{i}$ and $\overline{\sigma}_{i}$ for the induced
elements 
\[
\overline{\sigma}_{i}=\sigma_{i}+(\mathrm{det}_{q}-1)\in\mathcal{O}_{q}\big(SL_{n}(\mathbb{C})\big)\qquad\sigma_{i}\in\mathcal{O}_{q}\big(M_{n}(\mathbb{C})\big)\leq\mathcal{O}_{q}\big(GL_{n}(\mathbb{C})\big)
\]
an we will write $\sigma_{i}(A)$ for $\sigma_{i}$ in an algebra
$A$ isomorphic to $\mathcal{O}_{q}\big(M_{t}(\mathbb{C})\big)$ for
some $t$. Theorem 6.1 in \cite{DL1} states that the subalgebra of
cocommutative elements in $\mathcal{O}_{q}\big(M_{n}(\mathbb{C})\big)$
is freely generated (as a commutative algebra) by $\sigma_{1},\dots,\sigma_{n}$
, and (consequently) in $\mathcal{O}_{q}\big(GL_{n}(\mathbb{C})\big)$
it is generated by $\sigma_{1},\dots,\sigma_{n},\sigma_{n}^{-1}$,
giving an algebra isomorphic to $\mathbb{C}[t_{1},\dots,t_{n},t_{n}^{-1}]$.
The case of $SL_{n}$ is proved in \cite{DL2}: $\mathcal{O}_{q}\big(SL_{n}(\mathbb{C})\big)^{\mathrm{coc}}$
is generated by $\overline{\sigma}_{1},\dots,\overline{\sigma}_{n-1}$
and is isomorphic to $\mathbb{C}[t_{1},\dots,t_{n-1}]$.

\subsection{Poincaré-Birkhoff-Witt basis in the quantized coordinate ring of
matrices}

Several properties of $\mathcal{O}_{q}\big(M_{n}(\mathbb{C})\big)$
can be deduced by the observation that it is an iterated Ore extension.
It means that there exists a finite sequence of $\mathbb{C}$-algebras
$R_{0},R_{1},\dots,R_{n^{2}}$ such that $R_{0}=\mathbb{C}$ and $R_{i+1}=R_{i}[x_{i};\tau_{i},\delta_{i}]$,
the skew polynomial ring in $x_{i}$ for an appropriate automorphism
$\tau_{i}\in\mathrm{Aut}(R_{i})$ and a derivation $\delta_{i}\in\mathrm{Der}(R_{i})$. 

This choice of sequence of subalgebras includes an ordering on the
variables that is -- from the several possible options, now -- the
lexicographic ordering on $\{1,\dots,n\}\times\{1,\dots,n\}$. Moreover,
an iterated Ore extension as $\mathcal{O}_{q}\big(M_{n}(\mathbb{C})\big)$
has a Poincaré-Birkhoff-Witt basis, i.e. a $\mathbb{C}$-basis consisting
of the ordered monomials of the variables $x_{i,j}$. So, in the following,
we will refer to the following basis as the monomial basis of $\mathcal{O}_{q}\big(M_{n}(\mathbb{C})\big)$:
\[
x_{1,1}^{k_{1,1}}x_{1,2}^{k_{1,2}}x_{1,3}^{k_{1,3}}\dots x_{1,n}^{k_{1,n}}x_{2,1}^{k_{2,1}}\dots x_{n,n}^{k_{n,n}}\qquad\big(k_{ij}\in\mathbb{N},\, i,j\in\{1,2,\dots,n\}\big)
\]
It is indeed a basis, see \cite{BG}. 

Since the defining relations of $\mathcal{O}_{q}\big(M_{n}(\mathbb{C})\big)$
are homogeneous with respect to the total degree in the free algebra,
$\mathcal{O}_{q}\big(M_{n}(\mathbb{C})\big)$ inherits an $\mathbb{N}$-graded
algebra structure, i.e. 
\[
\mathcal{O}_{q}\big(M_{n}(\mathbb{C})\big)=\bigoplus_{d\in\mathbb{N}}\mathcal{O}_{q}\big(M_{n}(\mathbb{C})\big)_{d}
\]
as a vector space and $\mathcal{O}_{q}\big(M_{n}(\mathbb{C})\big)_{d}\cdot\mathcal{O}_{q}\big(M_{n}(\mathbb{C})\big)_{e}\subseteq\mathcal{O}_{q}\big(M_{n}(\mathbb{C})\big)_{d+e}$
for all $d,e\in\mathbb{N}$. Consequently, we may define a degree
function $\deg:\mathcal{O}_{q}\big(M_{n}(\mathbb{C})\big)\to\mathbb{N}$
as the maximum of the degrees of nonzero homogeneous components.

Although $\det_{q}-1$ is not homogeneous with respect to the total
degree, it is homogeneous modulo $n$ so the quotient algebra $\mathcal{O}_{q}\big(SL_{n}(\mathbb{C})\big)$
becomes a $\mathbb{Z}/n\mathbb{Z}$-graded algebra.

\subsection{Associated graded ring}

For a filtered ring $\big(R,\{\mathcal{F}^{d}\}_{d\in\mathbb{N}}\big)$
i.e. where $\{\mathcal{F}^{d}\}_{d\in\mathbb{N}}$ is an ascending
chain of subspaces in $R$ such that $R=\cup_{d\in\mathbb{N}}\mathcal{F}^{d}$
and $\mathcal{F}^{d}\cdot\mathcal{F}^{e}\subseteq\mathcal{F}^{d+e}$
for all $d,e\in\mathbb{N}$, we define its associated graded ring
\[
\mathrm{gr}(R):=\bigoplus_{d\in\mathbb{N}}\mathcal{F}^{d}/\mathcal{F}^{d-1}
\]
where we use the notation $\mathcal{F}^{-1}=\{0\}$. The multiplication
of $\mathrm{gr}(R)$ is defined in the usual way: 
\[
\mathcal{F}^{d}/\mathcal{F}^{d-1}\times\mathcal{F}^{e}/\mathcal{F}^{e-1}\to\mathcal{F}^{d+e}/\mathcal{F}^{d+e-1}
\]
\[
\big(x+\mathcal{F}^{d-1},y+\mathcal{F}^{e-1}\big)\mapsto xy+\mathcal{F}^{d+e-1}
\]
Clearly, it is a graded algebra by definition. In fact, $\mathrm{gr}(.)$
can be made into a functor defined as follows: for a morphism of filtered
algebras $f:\big(R,\{\mathcal{F}^{d}\}_{d\in\mathbb{N}}\big)\to\big(S,\{\mathcal{G}^{d}\}_{d\in\mathbb{N}}\big)$
(i.e. when $f(\mathcal{F}^{d})\subseteq\mathcal{G}^{d})$ we define
\[
\mathrm{gr}(f):\mathrm{gr}(R)\to\mathrm{gr}(S)
\]
\[
\big(x_{d}+\mathcal{F}^{d-1}\big)_{d\in\mathbb{N}}\mapsto\big(f(x_{d})+\mathcal{G}^{d-1}\big)_{d\in\mathbb{N}}
\]
One can check that it is indeed well defined and preserves composition.
A basic property of $\mathrm{gr}(.)$ is that if we have a map $f:R\to S$
such that $f(\mathcal{F}^{d})=\mathcal{G}^{d}$ then the $\mathrm{gr}(f)$
is also surjective.

\section{Equivalence of the statements\label{sec:main}}

As it is mentioned in the introduction, Theorem \ref{thm:The-subalgebra}
follows directly from Theorem \ref{thm:The-centralizer-of-the-trace}.
Indeed, since $\sigma_{i}$'s are commuting generators in the subalgebra
of cocommutative elements in $\mathcal{O}_{q}\big(M_{n}(\mathbb{C})\big)$,
$\mathcal{O}_{q}\big(GL_{n}(\mathbb{C})\big)$ and $\mathcal{O}_{q}\big(SL_{n}(\mathbb{C})\big)$
(see Section \ref{sec:Prerequisites}), any commutative subalgebra
containing the subalgebra of cocommutative elements is contained in
the centralizer of $\sigma_{1}$.

Moreover, the following proposition shows that it is enough to prove
Theorem \ref{thm:The-centralizer-of-the-trace} in the case of $\mathcal{O}_{q}\big(M_{n}(\mathbb{C})\big)$.
\begin{prop}
Assume that $n\in\mathbb{N}^{+}$ and $q\in\mathbb{C}^{\times}$ is
not a root of unity. The following are equivalent:\label{prop:implications}
\begin{enumerate}
\item The centralizer of $\sigma_{1}\in\mathcal{O}_{q}\big(M_{n}(\mathbb{C})\big)$
is generated by $\sigma_{1},\dots,\sigma_{n-1},\sigma_{n}$.
\item The centralizer of $\sigma_{1}\in\mathcal{O}_{q}\big(GL_{n}(\mathbb{C})\big)$
is generated by $\sigma_{1},\dots,\sigma_{n-1},\sigma_{n},\sigma_{n}^{-1}$.
\item The centralizer of $\overline{\sigma}_{1}\in\mathcal{O}_{q}\big(SL_{n}(\mathbb{C})\big)$
is generated by $\overline{\sigma}_{1},\dots,\overline{\sigma}_{n-1}$.
\end{enumerate}
\end{prop}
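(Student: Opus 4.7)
The plan is to prove the two pairwise equivalences $(1)\Leftrightarrow(2)$ and $(2)\Leftrightarrow(3)$, using the natural maps $\mathcal{O}_{q}\big(M_n(\mathbb{C})\big)\hookrightarrow \mathcal{O}_{q}\big(GL_n(\mathbb{C})\big) = \mathcal{O}_{q}\big(M_n(\mathbb{C})\big)[\sigma_n^{-1}]$ (localization at the central regular element $\sigma_n=\mathrm{det}_q$) and $\pi\colon \mathcal{O}_{q}\big(GL_n(\mathbb{C})\big) \twoheadrightarrow \mathcal{O}_{q}\big(SL_n(\mathbb{C})\big)$ (quotient by the central regular element $\sigma_n-1$). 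Throughout I use the facts recalled in Section \ref{sec:Prerequisites} that $\mathbb{C}[\sigma_1,\dots,\sigma_n]$, $\mathbb{C}[\sigma_1,\dots,\sigma_n,\sigma_n^{-1}]$ and $\mathbb{C}[\overline{\sigma}_1,\dots,\overline{\sigma}_{n-1}]$ are exactly the subalgebras of cocommutative elements in the three algebras, and that each of these subalgebras lies inside the corresponding centralizer of $\sigma_1$ (respectively $\overline{\sigma}_1$).

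For $(1)\Leftrightarrow(2)$: since $\sigma_n$ is central, if $a\in\mathcal{O}_{q}\big(GL_n(\mathbb{C})\big)$ centralizes $\sigma_1$ then so does $\sigma_n^{k}a\in\mathcal{O}_{q}\big(M_n(\mathbb{C})\big)$ for $k$ large enough, giving $(1)\Rightarrow(2)$. The converse $(2)\Rightarrow(1)$ follows from the observation that $\mathcal{O}_{q}\big(M_n(\mathbb{C})\big)\hookrightarrow\mathcal{O}_{q}\big(GL_n(\mathbb{C})\big)$ is a bialgebra inclusion, so cocommutativity of an element is intrinsic: an $a\in\mathcal{O}_{q}\big(M_n(\mathbb{C})\big)$ which by (2) is cocommutative in $\mathcal{O}_{q}\big(GL_n(\mathbb{C})\big)$ is already cocommutative in $\mathcal{O}_{q}\big(M_n(\mathbb{C})\big)$, hence in $\mathbb{C}[\sigma_1,\dots,\sigma_n]$.

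The backbone for $(2)\Leftrightarrow(3)$ is the following lemma, which I would establish first: for every $k\in\mathbb{Z}$, the projection $\pi$ restricts to a $\mathbb{C}$-linear isomorphism $\mathcal{O}_{q}\big(GL_n(\mathbb{C})\big)_{k}\xrightarrow{\sim}\mathcal{O}_{q}\big(SL_n(\mathbb{C})\big)_{k\bmod n}$, where the $\mathbb{Z}$-grading on $\mathcal{O}_{q}\big(GL_n(\mathbb{C})\big)$ extends the one on $\mathcal{O}_{q}\big(M_n(\mathbb{C})\big)$ via $\deg\sigma_n^{-1}=-n$. Surjectivity is immediate because $\pi(\sigma_n)=1$ allows shifting $\mathbb{Z}$-degrees by multiples of $n$. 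Injectivity is the crux: a homogeneous element of $\ker\pi$ can be written as $(\sigma_n-1)c$, and decomposing $c=\sum_e c_e$ into $\mathbb{Z}$-homogeneous parts forces $c_e=\sigma_n c_{e-n}$ at every degree except the target one; combined with the finiteness of support of $c$ and the fact that $\sigma_n$ is a central unit, this forces $c=0$.

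Given the lemma, $(3)\Rightarrow(2)$ becomes short: reduce to a $\mathbb{Z}$-homogeneous $a\in\mathcal{O}_{q}\big(GL_n(\mathbb{C})\big)_d$ centralizing $\sigma_1$, apply (3) to $\pi(a)$ to write it as a $\mathbb{Z}/n$-homogeneous polynomial in $\overline{\sigma}_1,\dots,\overline{\sigma}_{n-1}$, and construct a $\mathbb{Z}$-homogeneous lift $\tilde P\in\mathbb{C}[\sigma_1,\dots,\sigma_n^{\pm 1}]\cap\mathcal{O}_{q}\big(GL_n(\mathbb{C})\big)_d$ by inserting appropriate powers of $\sigma_n$ into each monomial; then $a-\tilde P\in\ker\pi\cap\mathcal{O}_{q}\big(GL_n(\mathbb{C})\big)_d$ vanishes by injectivity, so $a=\tilde P$. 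For $(2)\Rightarrow(3)$, decompose $\overline{a}\in\mathrm{Cent}(\overline{\sigma}_1)$ into $\mathbb{Z}/n$-homogeneous pieces $\overline{a}_k$ and lift each to $\tilde a_k\in\mathcal{O}_{q}\big(GL_n(\mathbb{C})\big)_k$ via the lemma; the bracket $[\tilde a_k,\sigma_1]$ is $\mathbb{Z}$-homogeneous and projects to $[\overline{a}_k,\overline{\sigma}_1]=0$, so the injectivity part of the lemma forces $[\tilde a_k,\sigma_1]=0$, and (2) then places $\tilde a_k$ in $\mathbb{C}[\sigma_1,\dots,\sigma_n^{\pm 1}]$, whence $\overline{a}_k=\pi(\tilde a_k)$ lies in $\mathbb{C}[\overline{\sigma}_1,\dots,\overline{\sigma}_{n-1}]$. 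The main obstacle of the proof is the injectivity half of the key lemma; everything else reduces to bookkeeping with the two gradings and the centrality of $\sigma_n$.
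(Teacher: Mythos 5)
Your argument is correct. For $(1)\Leftrightarrow(2)$ you do exactly what the paper does: multiply by a power of the central $\det_q$ to land in $\mathcal{O}_{q}\big(M_{n}(\mathbb{C})\big)$ for one direction, and use that cocommutativity is preserved under the bialgebra inclusion for the other. For the $SL_n$ equivalence your route is genuinely different in organization: the paper proves $(1)\Leftrightarrow(3)$ directly, working in the $\mathbb{N}$-graded $\mathcal{O}_{q}\big(M_{n}(\mathbb{C})\big)$ and relying on a small lemma (its Lemma \ref{lem:no homogeneous element}) that the ideal $(r-1)$ generated by a central homogeneous non-zero-divisor contains no nonzero homogeneous element; it then performs ad hoc adjustments of representatives by powers of $\det_q$ separately in each direction (the elements $h'$ and $s'$). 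You instead chain through $\mathcal{O}_{q}\big(GL_{n}(\mathbb{C})\big)$ with its $\mathbb{Z}$-grading ($\deg\sigma_n^{-1}=-n$) and package both the lifting trick and the no-homogeneous-kernel lemma into a single structural statement: $\pi$ restricts to an isomorphism $\mathcal{O}_{q}\big(GL_{n}(\mathbb{C})\big)_{k}\to\mathcal{O}_{q}\big(SL_{n}(\mathbb{C})\big)_{k\bmod n}$. Your injectivity argument (the recursion $c_e=\sigma_n c_{e-n}$ off the target degree, forcing the support of $c$ to have minimum $d$ and maximum $d-n$, hence $c=0$) is sound and is the $\mathbb{Z}$-graded analogue of the paper's lemma; the surjectivity half is the paper's lifting trick made once and for all. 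What your version buys is uniformity -- both directions of $(2)\Leftrightarrow(3)$ reduce to lifting a homogeneous element and quoting injectivity -- at the mild cost of having to set up the $\mathbb{Z}$-grading on the localization and verify that $\ker\pi$ is the principal ideal $(\sigma_n-1)$ there (routine, since $\sigma_n-1$ is central and localization commutes with this quotient, but worth a sentence in a full write-up). The paper's version stays entirely inside $\mathcal{O}_{q}\big(M_{n}(\mathbb{C})\big)$, where the grading is the standard $\mathbb{N}$-grading, and its final step $(3)\Rightarrow(1)$ needs the extra observation $\mathbb{C}\langle\sigma_{1},\dots,\sigma_{n},\sigma_{n}^{-1}\rangle\cap\mathcal{O}_{q}\big(M_{n}(\mathbb{C})\big)=\mathbb{C}\langle\sigma_{1},\dots,\sigma_{n}\rangle$, which you absorb into your already-proved $(2)\Rightarrow(1)$.
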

For the proof, we need the following short lemma:
\begin{lem}
Let $R=\oplus_{i\geq0}R_{i}$ be an $\mathbb{N}$-graded algebra and
$r\in R_{k}$ a central element that is not a zero-divisor. Then for
all $d\in\mathbb{N}$, $(r-1)\cap R_{d}=0$.\label{lem:no homogeneous element}\end{lem}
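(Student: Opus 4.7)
The plan is to take an arbitrary $x \in (r-1) \cap R_{d}$, use the centrality of $r-1$ to write $x = (r-1)y$ for some $y \in R$, and then derive a contradiction from examining the top and bottom homogeneous components of $(r-1)y$ unless $y = 0$.

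Assume $y \neq 0$ and write $y = y_{m} + y_{m+1} + \dots + y_{N}$ with $y_{i} \in R_{i}$, both $y_{m}$ and $y_{N}$ nonzero and $m \leq N$. Since $r$ is homogeneous of degree $k$, each $r y_{i}$ lies in $R_{k+i}$, so the degree $j$ component of $(r-1)y = ry - y$ is $r y_{j-k} - y_{j}$, with the convention $y_{i} = 0$ for $i \notin [m,N]$. The highest nonvanishing component of $(r-1)y$ therefore sits in degree $k+N$ and equals $r y_{N}$; since $r$ is not a zero-divisor and $y_{N} \neq 0$, this component is nonzero. Symmetrically, the lowest nonvanishing component lies in degree $m$ and equals $-y_{m}$, because the potential cross-term $r y_{m-k}$ vanishes by minimality of $m$.

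Now $(r-1)y = x \in R_{d}$ is concentrated in a single degree, so both the top and the bottom degrees of $(r-1)y$ must equal $d$, giving $k+N = d$ and $m = d$. Combining these with $m \leq N$ yields $d \leq d-k$, forcing $k \leq 0$; since the positive-degree case $k \geq 1$ is the one relevant for the intended application (in particular to $r = \mathrm{det}_{q} \in R_{n}$), this is a contradiction. Hence $y = 0$ and $x = 0$.

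The argument is a short bookkeeping exercise on graded degrees, so I do not anticipate any serious obstacle. The only place the zero-divisor hypothesis on $r$ is used is to guarantee $r y_{N} \neq 0$, which pins the top degree of $(r-1)y$ to $k+N$ and triggers the contradiction; without it, the highest-degree constraint would collapse and $y$ could be nonzero.
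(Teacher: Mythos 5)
Your argument is correct and is essentially the same as the paper's: write $x=(r-1)y$ using centrality, and compare the top homogeneous component $ry_{N}$ (nonzero since $r$ is not a zero-divisor) in degree $N+k$ with the bottom component $-y_{m}$ in degree $m\leq N<N+k$, which cannot both sit in the single degree $d$. You are also right that the statement implicitly requires $k\geq1$ (the paper's proof uses $i_{0}\leq\deg y<\deg y+k$ in exactly the same way), and that is the only case needed for the application to $\det_{q}$.
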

\begin{proof}
Since $r-1$ is central, its generated ideal is its generated left
ideal so $0\neq x\in(r-1)$ means that $x=y\cdot(r-1)$ for some $y\in R$.
Let $y=\sum_{i=i_{0}}^{\deg y}y_{i}\in\oplus_{i}R_{i}$ be the homogeneous
decomposition of $y$ where $y_{i_{0}}\neq0$. Then the highest degree
nonzero homogeneous component of $y\cdot(r-1)$ is $y_{\deg y}r$
which is of degree $\deg y+k$ since $r$ is not a zero-divisor. While
the lowest degree nonzero component of $y\cdot(r-1)$ is $-y_{i_{0}}$
which is of degree $i_{0}\leq\deg y<\deg y+k$. Therefore, $x$ cannot
be homogeneous.
\end{proof}

\begin{proof}[Proof of Proposition \ref{prop:implications}]
 Assume that $1)$ is true and let $h\in\mathcal{O}_{q}\big(GL_{n}(\mathbb{C})\big)$
that commutes with $\sigma_{1}$. By the definition of $\mathcal{O}_{q}\big(GL_{n}(\mathbb{C})\big)$,
there exists an $k\in\mathbb{N}$ such that $h\cdot\mathrm{det}_{q}^{k}\in\mathcal{O}_{q}\big(M_{n}(\mathbb{C})\big)\leq\mathcal{O}_{q}\big(GL_{n}(\mathbb{C})\big)$
which also commutes with $\sigma_{1}$ since $\mathrm{det}_{q}$ is
central. Therefore, by 1) we have $h\cdot\mathrm{det}_{q}^{k}\in\mathbb{C}\langle\sigma_{1},\dots,\sigma_{n-1},\sigma_{n}\rangle$
hence $h=h\cdot\mathrm{det}_{q}^{k}\cdot\det_{q}^{-k}\in\mathbb{C}\langle\sigma_{1},\dots,\sigma_{n-1},\sigma_{n},\sigma_{n}^{-1}\rangle$
and so $2)$ follows. Conversely, assume $2)$ and take an $h\in\mathcal{O}_{q}\big(M_{n}(\mathbb{C})\big)\subseteq\mathcal{O}_{q}\big(GL_{n}(\mathbb{C})\big)$
that commutes with $\sigma_{1}$. By the assumption, $h\in\mathbb{C}\langle\sigma_{1},\dots,\sigma_{n-1},\sigma_{n},\sigma_{n}^{-1}\rangle$
hence it is cocommutative in $\mathcal{O}_{q}\big(GL_{n}(\mathbb{C})\big)$.
Since $\mathcal{O}_{q}\big(M_{n}(\mathbb{C})\big)$ is a subbialgebra
of $\mathcal{O}_{q}\big(GL_{n}(\mathbb{C})\big)$, $h$ is cocommutative
in $\mathcal{O}_{q}\big(M_{n}(\mathbb{C})\big)$ too, hence by $\mathcal{O}_{q}\big(M_{n}(\mathbb{C})\big)^{\mathrm{coc}}=\mathbb{C}\langle\sigma_{1},\dots,\sigma_{n-1},\sigma_{n}\rangle$
(see Section \ref{sec:Prerequisites}) $1)$ follows.

Now, we prove $1)\iff3)$: First, assume $1)$ and let $\overline{h}\in\mathcal{O}_{q}\big(SL_{n}(\mathbb{C})\big)$
that commutes with $\overline{\sigma}_{1}$. Since $\mathcal{O}_{q}\big(SL_{n}(\mathbb{C})\big)$
is $\mathbb{Z}/n\mathbb{Z}$-graded and $\overline{\sigma}_{1}$ is
homogeneous with respect to this grading, its centralizer is generated
by homogeneous elements. So we may assume that $\overline{h}$ is
homogeneous as well. Let $k=\deg(\overline{h})$. Take an $h\in\mathcal{O}_{q}\big(M_{n}(\mathbb{C})\big)$
that represents $\overline{h}\in\mathcal{O}_{q}\big(SL_{n}(\mathbb{C})\big)=\mathcal{O}_{q}\big(M_{n}(\mathbb{C})\big)/(\det_{q}-1)$.
Let $h=\sum_{j=0}^{d}h_{jn+k}$ be the $\mathbb{N}$-homogeneous decomposition
of $h$ where $h_{jn+k}$ is homogeneous of degree $jn+k$ for all
$j\in\mathbb{N}$. (We do not need the other homogeneous components
as $\overline{h}$ is $\mathbb{Z}/n\mathbb{Z}$-homogeneous so we
may assume that $h$ has nonzero homogeneous components only in degrees
$\equiv\deg(\overline{h})$ modulo $n$.) Then we can take 
\[
h':=\sum_{j=0}^{d}h_{jn+k}\cdot\mathrm{det}_{q}^{d-j}\in\mathcal{O}_{q}\big(M_{n}(\mathbb{C})\big)_{dn+k}
\]
which is a homogeneous element of degree $dn+k$ representing $\overline{h}$
in $\mathcal{O}_{q}\big(M_{n}(\mathbb{C})\big)$. Therefore, $\sigma_{1}h'-h'\sigma_{1}\in(\det_{q}-1)\cap\mathcal{O}_{q}\big(M_{n}(\mathbb{C})\big)_{dn+k+1}$
because $\overline{\sigma}_{1}\overline{h}-\overline{h}\overline{\sigma}_{1}=0\in\mathcal{O}_{q}\big(SL_{n}(\mathbb{C})\big)$
and $\sigma_{1}$ is homogeneous of degree $1$. By Lemma \ref{lem:no homogeneous element},
we get $(\det_{q}-1)\cap\mathcal{O}_{q}\big(M_{n}(\mathbb{C})\big)_{dn+k+1}=0$
meaning $\sigma_{1}h'=h'\sigma_{1}$. Then applying $1)$ gives $h'\in\mathbb{C}\langle\sigma_{1},\dots,\sigma_{n}\rangle$
hence $\overline{h}\in\mathbb{C}\langle\overline{\sigma_{1}},\dots,\overline{\sigma_{n-1}}\rangle$
as we claimed.

Conversely, assume $3)$ and let $h\in\mathcal{O}_{q}\big(M_{n}(\mathbb{C})\big)$
such that $\sigma_{1}h=h\sigma_{1}$. Since $\sigma_{1}$ is $\mathbb{N}$-homogeneous,
its centralizer is also generated by homogeneous elements so we may
assume that $h$ is homogeneous. Then we can take the image $\overline{h}$
of $h$ in $\mathcal{O}_{q}\big(SL_{n}(\mathbb{C})\big)$ which is
homogeneous with respect to the $\mathbb{Z}/n\mathbb{Z}$-grading
of $\mathcal{O}_{q}\big(SL_{n}(\mathbb{C})\big)$. Let $k=\deg(\overline{h})$.
By the assumption, $\overline{h}$ commutes with $\overline{\sigma}_{1}$
hence $\overline{h}\in\mathbb{C}\langle\overline{\sigma}_{1},\dots,\overline{\sigma}_{n-1}\rangle$
by $3)$. This decomposition of $\overline{h}$ can be lifted to $\mathcal{O}_{q}\big(M_{n}(\mathbb{C})\big)$
giving an element $s\in\mathbb{C}\langle\sigma_{1},\dots,\sigma_{n-1}\rangle$
such that $h-s\in(\det_{q}-1)$. As $\overline{h}$ was $\mathbb{Z}/n\mathbb{Z}$-homogeneous,
$s$ can also be chosen to be $\mathbb{Z}/n\mathbb{Z}$-homogeneous
since $\overline{\sigma}_{1},\dots,\overline{\sigma}_{n-1}$ are $\mathbb{Z}/n\mathbb{Z}$-homogeneous.
Let $d=\frac{1}{n}\big(\max(\deg h,\deg s)-k\big)$ and take $s=\sum_{j=0}^{d}s_{jn+k}$
the homogeneous decomposition of $s$. If $\deg(s)>\deg(h)$ then
let 
\[
h'=h\cdot\mathrm{det}_{q}^{\frac{1}{n}\big(\deg(s)-\deg(h)\big)}
\]
 so now $d=\deg(s)=\deg(h')$. (The exponent is an integer since $\deg(h)=\deg(s)$
modulo $n$.) Otherwise, let $h'=h$.

Now, the same way as in the proof of $1)\Rightarrow3)$, we can modify
$s$ as follows: Let 
\[
s':=\sum_{j=0}^{d}s_{jn+k}\cdot\mathrm{det}_{q}^{d-j}
\]
Then $s'\in\mathbb{C}\langle\sigma_{1},\dots,\sigma_{n}\rangle$,
it is $\mathbb{N}$-homogeneous of degree $nd+k$, and $s-s'\in(\det_{q}-1)$.
So $h'-s'=(h'-h)+(h-s)+(s-s')\in(\det_{q}-1)\cap\mathcal{O}_{q}\big(M_{n}(\mathbb{C})\big)_{nd+k}$
which is zero by Lemma \ref{lem:no homogeneous element}. Hence, $h'\in\mathbb{C}\langle\sigma_{1},\dots,\sigma_{n}\rangle$
which gives $h\in\mathbb{C}\langle\sigma_{1},\dots,\sigma_{n},\sigma_{n}^{-1}\rangle$.
However, $\mathbb{C}\langle\sigma_{1},\dots,\sigma_{n},\sigma_{n}^{-1}\rangle\cap\mathcal{O}_{q}\big(M_{n}(\mathbb{C})\big)=\mathbb{C}\langle\sigma_{1},\dots,\sigma_{n}\rangle$
as they are the subalgebras of cocommutative elements in $\mathcal{O}_{q}\big(GL_{n}(\mathbb{C})\big)$
and $\mathcal{O}_{q}\big(M_{n}(\mathbb{C})\big)$.
\end{proof}

\section{Case of $\mathcal{O}_{q}\big(SL_{2}(\mathbb{C})\big)$\label{sec:Case-of n=00003D2}}

In this section, we prove Theorem \ref{thm:The-centralizer-of-the-trace}
for $\mathcal{O}_{q}\big(SL_{2}(\mathbb{C})\big)$ which is the base
step of the induction that we use in the proof of the general case.
In fact, in the induction step we will show the statement for $\mathcal{O}_{q}\big(M_{n}(\mathbb{C})\big)$
and not for $\mathcal{O}_{q}\big(SL_{n}(\mathbb{C})\big)$ but in
the light of Proposition \ref{prop:implications} these are equivalent.
The only reason why we use $SL_{2}$ in this part and not $M_{2}$
is that $\mathcal{O}_{q}\big(SL_{2}(\mathbb{C})\big)$ has fewer elements
(in the sense of Gelfand-Kirillov dimension) so the computations are
shorter.
\begin{prop}
Assume that $q\in\mathbb{C}^{\times}$ is not a root of unity. The
centralizer of $\overline{\sigma}_{1}\in\mathcal{O}_{q}\big(SL_{2}(\mathbb{C})\big)$
is $\mathbb{C}\langle\sigma_{1}\rangle$.
\end{prop}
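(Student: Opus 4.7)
The plan is to exploit the $\mathbb{Z}$-grading on $\mathcal{O}_q\big(SL_2(\mathbb{C})\big)$ given by $w(\overline{a}) = w(\overline{d}) = 0$, $w(\overline{b}) = 1$, $w(\overline{c}) = -1$. All defining relations are homogeneous in this weight and $\mathrm{det}_q - 1$ has weight $0$, so this descends to a well-defined $\mathbb{Z}$-grading on $\mathcal{O}_q\big(SL_2(\mathbb{C})\big)$. Because $\overline{\sigma}_1$ has weight $0$, its centralizer is a $\mathbb{Z}$-graded subalgebra, so I may decompose any centralizing element $h = \sum_n h_n$ and treat each homogeneous component separately.

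For $n \neq 0$, I would show $h_n = 0$ by PBW-leading-term analysis. A weight-$n$ PBW monomial has the form $\overline{c}^k\overline{b}^j\overline{a}^i$ or $\overline{c}^k\overline{b}^j\overline{d}^l$ (with $l \geq 1$), and $j - k = n$ forces $j + k \geq |n| > 0$. The elementary formulas
\[
[\overline{a}, \overline{c}^k\overline{b}^j\overline{a}^i] = (q^{j+k}-1)\,\overline{c}^k\overline{b}^j\overline{a}^{i+1}, \qquad [\overline{d}, \overline{c}^k\overline{b}^j\overline{d}^l] = (q^{-(j+k)}-1)\,\overline{c}^k\overline{b}^j\overline{d}^{l+1},
\]
combined with the fact that $[\overline{d}, \overline{c}^k\overline{b}^j\overline{a}^i]$ lowers the $\overline{a}$-exponent (via $\overline{d}\overline{a} = 1 + q^{-1}\overline{b}\overline{c}$) and $[\overline{a}, \overline{c}^k\overline{b}^j\overline{d}^l]$ lowers the $\overline{d}$-exponent, show that the coefficient of the top $\overline{a}$-degree (resp.\ top $\overline{d}$-degree) basis monomial in $[\overline{\sigma}_1, h_n]$ equals a nonvanishing factor $q^{\pm(j+k)}-1$ times the coefficient of a corresponding basis monomial in $h_n$. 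Since $q$ is not a root of unity and $j + k > 0$, this forces all such coefficients to vanish, so $h_n = 0$.

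For $n = 0$, the weight-zero subalgebra $B$ is generated by $\overline{a}$ and $\overline{d}$ alone, as $\overline{b}\overline{c} = q^{-1}(\overline{a}\overline{d} - 1)$; the induced relation is $\overline{d}\overline{a} = q^{-2}\overline{a}\overline{d} + (1 - q^{-2})$. Filtering $B$ by total $(\overline{a},\overline{d})$-degree yields an associated graded equal to the quantum plane $\mathbb{C}_{q^{-2}}[a,d]$ with $da = q^{-2}ad$. In that quantum plane a direct calculation using $[a+d, a^K d^L] = (1-q^{-2L})a^{K+1}d^L + (q^{-2K}-1)a^K d^{L+1}$ yields a recursion whose only solution (up to scalar) is $c_{n-r,r} = \binom{n}{r}_{q^{-2}} c_{n,0}$ — which by the quantum binomial theorem is precisely the expansion of $(a+d)^n$. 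Hence the graded centralizer is $\mathbb{C}[a+d]$, and standard induction on degree (subtracting an appropriate scalar multiple of $(\overline{a}+\overline{d})^n$ at each step) lifts this to the centralizer of $\overline{\sigma}_1$ in $B$ being $\mathbb{C}\langle\overline{\sigma}_1\rangle$. The key nonvanishing input throughout is that $1 - q^{-2r} \neq 0$ for $r \geq 1$, which is exactly the hypothesis that $q$ is not a root of unity.

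The step I expect to be most delicate is the leading-term bookkeeping in the $n \neq 0$ case: when expanding products in PBW normal form the reductions via $\overline{a}\overline{d} = 1 + q\overline{b}\overline{c}$ and $\overline{d}\overline{a} = 1 + q^{-1}\overline{b}\overline{c}$ produce many cross terms, so some care (e.g., inducting downward on the $\overline{a}$- or $\overline{d}$-exponent at fixed weight) is required to ensure that the leading contribution to $[\overline{\sigma}_1, h_n]$ is unambiguous and is not absorbed by lower-order corrections.
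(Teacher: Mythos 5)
Your argument is correct, and it takes a genuinely different route from the paper's. You first split the problem along the torus-weight $\mathbb{Z}$-grading $w(\overline{b})=1$, $w(\overline{c})=-1$, $w(\overline{a})=w(\overline{d})=0$ (well defined on $\mathcal{O}_{q}\big(SL_{2}(\mathbb{C})\big)$ since the defining relations and $\mathrm{det}_{q}-1$ are $w$-homogeneous), kill the nonzero-weight components by a leading $\overline{a}$- or $\overline{d}$-exponent argument (the scalar $q^{\pm(j+k)}-1$ is nonzero precisely because $j+k\geq|n|>0$; the downward induction you worry about is in fact unproblematic, since left or right multiplication by $\overline{d}$ strictly lowers the $\overline{a}$-exponent of an $\overline{a}$-monomial without introducing $\overline{d}$'s, and symmetrically, so the top coefficient is never absorbed), and then reduce weight zero to the two-generator subalgebra $\mathbb{C}\langle\overline{a},\overline{d}\rangle$ with $\overline{d}\overline{a}=q^{-2}\overline{a}\overline{d}+(1-q^{-2})$, whose associated graded is the quantum plane, where your recursion shows the degree-$N$ centralizer of $a+d$ is exactly $\mathbb{C}(a+d)^{N}$. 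The paper instead works with the full basis $a^{i}b^{k}c^{l}$, $b^{k}c^{l}d^{j}$, $b^{k}c^{l}$ and only the $\mathbb{Z}/2\mathbb{Z}$-grading by parity of the $a$- (or $d$-) exponent; it computes all commutators $[a+d,a^{i}p(b,c)]$, $[a+d,p(b,c)d^{j}]$, $[a+d,p(b,c)]$ explicitly and shows by recursions on the homogeneous components of the $p$'s that a centralizing element with $a$-degree at most $\alpha$ is determined by $\alpha+1$ scalars, matching $\dim\sum_{i\leq\alpha}\mathbb{C}\overline{\sigma}_{1}^{i}$. Your weight-grading reduction buys a cleaner conceptual structure --- most of the algebra is discarded at the outset and the residual computation lives in a quantum plane --- at the cost of having to justify that $\{\overline{a}^{i}\overline{d}^{j}\}$ is a basis of the weight-zero subalgebra (standard, but it should be said); the paper's dimension count avoids that subalgebra entirely but tracks many more coefficients. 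Both proofs ultimately rest on the same nonvanishing $q^{m}\neq1$ for $m\neq0$.
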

For simplicity, we will use the notations $a:=x_{1,1}+(\mathrm{det}_{q}-1)$,
$b:=x_{1,2}+(\mathrm{det}_{q}-1)$, $c:=x_{2,1}+(\mathrm{det}_{q}-1)$
and $d:=x_{2,2}+(\mathrm{det}_{q}-1)$ for the generators of $\mathcal{O}_{q}\big(SL_{2}(\mathbb{C})\big)$.
In particular, $\overline{\sigma}_{1}=a+d$.

By Theorem I.7.16. in \cite{BG} we have a basis of $\mathcal{O}_{q}\big(SL_{2}(\mathbb{C})\big)$
consisting of the following elements: 
\[
a^{i}b^{k}c^{l},\ b^{k}c^{l}d^{j},\ b^{k}c^{l}\qquad(i,j\in\mathbb{N}^{+},\, k,l\in\mathbb{N})
\]
We will use the $\mathbb{Z}/2\mathbb{Z}$-grading of $\mathcal{O}_{q}\big(SL_{2}(\mathbb{C})\big)$
defined as $\deg(a^{i}b^{k}c^{l})=i\ \mathrm{mod}\,2$ and $\deg(b^{k}c^{l}d^{j})=j\ \mathrm{mod}\,2$.
Note, that it is not the $\mathbb{Z}/2\mathbb{Z}$-grading that it
inherits from the $\mathbb{Z}$-grading of $\mathcal{O}_{q}\big(M_{2}(\mathbb{C})\big)$
which would be $i+k+l$ and $k+l+j$ modulo $2$. Still, this is a
grading in the sense of graded algebras.
\begin{proof}
First, let us compute the action of $\overline{\sigma}_{1}=a+d$ on
the basis elements:
\begin{eqnarray*}
(a+d)\cdot a^{i}b^{k}c^{l} & = & a^{i+1}b^{k}c^{l}+(1+q^{-1}bc)a^{i-1}b^{k}c^{l}=\\
 & = & a^{i+1}(b^{k}c^{l})+a^{i-1}(b^{k}c^{l}+q^{-2(i-1)-1}b^{k+1}c^{l+1})
\end{eqnarray*}
and similarly,
\begin{eqnarray*}
a^{i}b^{k}c^{l}\cdot(a+d) & = & q^{-(k+l)}a^{i+1}b^{k}c^{l}+q^{k+l}a^{i-1}(1+qbc)b^{k}c^{l}=\\
 & = & a^{i+1}(q^{-(k+l)}b^{k}c^{l})+a^{i-1}(q^{k+l}b^{k}c^{l}+q^{k+l+1}b^{k+1}c^{l+1})
\end{eqnarray*}
Hence, for the commutator, we get 
\begin{eqnarray}
\big[(a+d),a^{i}b^{k}c^{l}\big] & = & a^{i+1}\big((1-q^{-(k+l)})b^{k}c^{l}\big)+\label{eq:commutator n=00003D2}\\
 & + & a^{i-1}\big((1-q^{k+l})b^{k}c^{l}+\nonumber \\
 & + & (q^{-2(i-1)-1}-q^{k+l+1})b^{k+1}c^{l+1}\big)\nonumber 
\end{eqnarray}
By the same computation on $b^{k}c^{l}d^{j}$ and $b^{k}c^{l}$, one
can conclude that
\begin{eqnarray*}
\big[(a+d),b^{k}c^{l}d^{j}\big] & = & \big((q^{-(k+l)}-1)b^{k}c^{l}\big)d^{j+1}+\\
 & + & \big((q^{k+l}-1)b^{k}c^{l}+\\
 & + & (q^{k+l+1}-q^{-2(j-1)-1})b^{k+1}c^{l+1}\big)d^{j-1}
\end{eqnarray*}
\[
\big[(a+d),b^{k}c^{l}\big]=a(1-q^{-(k+l)})b^{k}c^{l}+(q^{-(k+l)}-1)b^{k}c^{l}d
\]
Generally, for a polynomial $p\in\mathbb{C}[t_{1},t_{2}]$ and $i\geq1$:
\begin{eqnarray}
\big[(a+d),a^{i}p(b,c)\big] & = & a^{i+1}\sum_{m}\big((1-q^{-m})p_{m}(b,c)\big)+\label{eq:commutator n=00003D2 a*p(b,c)}\\
 & + & a^{i-1}\bigg(\sum_{m}(1-q^{m})p_{m}(b,c)+\nonumber \\
 & + & (q^{-2(i-1)-1}-q^{m+1})bc\cdot p_{m}(b,c)\bigg)\nonumber 
\end{eqnarray}
where $p_{m}$ is the $m$-th homogeneous component of $p$ with respect
to the $\mathbb{N}$-valued total degree on $\mathbb{C}[t_{1},t_{2}]\cong\mathbb{C}\langle b,c\rangle$.
The analogous computations for $p(b,c)d^{j}$ ($j\geq1$) and $p(b,c)$
give 
\begin{eqnarray}
\big[(a+d),p(b,c)d^{j}\big] & = & \sum_{m}\big((q^{-m}-1)p_{m}(b,c)\big)d^{j+1}+\label{eq:commutator n=00003D2 p(b,c)*d}\\
 & + & \sum_{m}\big((q^{m}-1)p_{m}(b,c)+\nonumber \\
 & + & (q^{m+1}-q^{-2(j-1)-1})bcp_{m}(b,c)\big)d^{j-1}\nonumber 
\end{eqnarray}
\begin{equation}
\big[(a+d),p(b,c)\big]=a\sum_{m}\big((1-q^{-m})p_{m}(b,c)\big)+\sum_{m}\big((q^{-m}-1)p_{m}(b,c)\big)d\label{eq:commutator n=00003D2 p(b,c)}
\end{equation}

To prove the statement, it is enough to show that in each subspace
$\sum_{i=0}^{\alpha}a^{i}\cdot\mathbb{C}\langle b,c\rangle+\sum_{j=0}^{\alpha}\mathbb{C}\langle b,c\rangle\cdot d^{i}\leq\mathcal{O}_{q}\big(SL_{2}(\mathbb{C})\big)$
the space of $\overline{\sigma}_{1}$-centralizing elements has dimension
$\alpha+1$. Indeed, $\sum_{i=0}^{\alpha}\mathbb{C}\overline{\sigma}_{1}^{i}$
has exactly dimension $\alpha+1$ by $\mathbb{C}\langle\overline{\sigma}_{1}\rangle\cong\mathbb{C}[t]$,
and these are $\overline{\sigma}_{1}$-centralizing elements, so then
there cannot be anything else that commutes with $\overline{\sigma}_{1}$. 

Assume that the nonzero element $g$ commutes with $\overline{\sigma}_{1}$
and express $g$ in the above mentioned basis as: 
\[
g=\sum_{i=1}^{\alpha}a^{i}r_{i}+\sum_{j=1}^{\beta}s_{j}d^{j}+u
\]
where $r_{i}$, $s_{j}$ and $u$ are elements of $\mathbb{C}\langle b,c\rangle$,
and $\alpha$ and $\beta$ are the highest powers of $a$ and $d$
appearing in the decomposition (i.e. $r_{\alpha}\neq0$ and $s_{\beta}\neq0$).
We will also write $r_{0}$ or $s_{0}$ for $u$, if it makes the
formula simpler. Since $\overline{\sigma}_{1}$ is a homogeneous element
with respect to the $\mathbb{Z}/2\mathbb{Z}$-grading, we may assume
that $g$ is also homogeneous. 

The proof is split into two cases: if $g$ has degree $0\in\mathbb{Z}/2\mathbb{Z}$
(hence $\alpha$ is even) then we will prove that the constant terms
of the $\frac{\alpha}{2}+1$ polynomials $r_{\alpha},r_{\alpha-2},\dots,r_{2},u\in\mathbb{C}[b,c]$
determine $g$ uniquely, and similarly, if $g\in\mathbb{Z}/2\mathbb{Z}$
has degree $1$ (hence $\alpha$ is odd) then the constant terms of
the $\frac{\alpha+1}{2}$ polynomials $r_{\alpha},r_{\alpha-2},\dots,r_{1}\in\mathbb{C}[b,c]$
also determine $g$ uniquely. This is enough, since then in the even
case, we get $\big(\frac{\alpha}{2}+1\big)+\frac{(\alpha-1)+1}{2}=\alpha+1$
for the dimension of the $\overline{\sigma}_{1}$-centralizing elements
as the sum of dimensions of homogeneous $\overline{\sigma}_{1}$-centralizing
elements in even and odd degrees. Similarly, if $\alpha$ is odd,
it is $\frac{\alpha+1}{2}+\frac{\alpha-1}{2}=\alpha+1$ so it is indeed
enough to prove the above claim.

First, we prove that $r_{\alpha}\in\mathbb{C}\cdot1$ in both cases.
If $\alpha=0$ then $r_{\alpha}=u$ so the $a^{i}b^{k}c^{l}$ terms
in $[a+d,g]$ (decomposed in the monomial basis) are the same as the
$a^{i}b^{k}c^{l}$ terms in $[a+d,u]$ by \ref{eq:commutator n=00003D2 a*p(b,c)},
\ref{eq:commutator n=00003D2 p(b,c)*d} and \ref{eq:commutator n=00003D2 p(b,c)}.
However, by \ref{eq:commutator n=00003D2 p(b,c)}, these terms would
be nonzero if $u\notin\mathbb{C}$. Now, assume that $\alpha\geq1$
and define the subspace 
\[
\mathcal{A}^{d}:=\mathrm{Span}_{\mathbb{C}}(a^{i}b^{k}c^{l},\ b^{k}c^{l}d^{j},\ b^{k}c^{l}\ |\ i\leq d,\ k,l\in\mathbb{N})
\]
for any $d\in\mathbb{N}$. Then, by the fact that $\overline{\sigma}_{1}\mathcal{A}^{\alpha-1}$,
$\mathcal{A}^{\alpha-1}\overline{\sigma}_{1}$, $d\mathcal{A}^{\alpha}$
and $\mathcal{A}^{\alpha}d$ are all contained in $\mathcal{A}^{\alpha}$
(using the defining relations), we have 
\[
\overline{\sigma}_{1}g-g\overline{\sigma}_{1}+\mathcal{A}^{\alpha}\subseteq\overline{\sigma}_{1}(a^{\alpha}r_{\alpha}+\mathcal{A}^{\alpha-1})-(a^{\alpha}r_{\alpha}+\mathcal{A}^{\alpha-1})\overline{\sigma}_{1}+\mathcal{A}^{\alpha}=a\cdot a^{\alpha}r_{\alpha}-a^{\alpha}r_{\alpha}\cdot a+\mathcal{A}^{\alpha}
\]
Moreover, if $r_{\alpha}=\sum\lambda_{k,l}b^{k}c^{l}$ then $a^{\alpha}r_{\alpha}\cdot a=\sum\lambda_{k,l}q^{-k-l}a^{\alpha+1}b^{k}c^{l}$.
Since the elements $a^{\alpha+1}b^{k}c^{l}$ are independent even
modulo $\mathcal{A}^{\alpha}$ by Section 2.3, $a^{\alpha}r_{\alpha}\cdot a$
can agree with $a^{\alpha+1}r_{\alpha}$ modulo $\mathcal{A}^{\alpha}$
only if $\lambda_{k,l}=0$ for all $(k,l)\neq(0,0)$. Therefore, $r_{\alpha}\in\mathbb{C}\cdot1$.

Now, we prove that for all $1\leq i\leq\alpha-1$, $r_{i+1}$ and
the constant term of $r_{i-1}$ determines $r_{i-1}\in\mathbb{C}[b,c]$.
Indeed, by equation \ref{eq:commutator n=00003D2 a*p(b,c)} we have
\begin{eqnarray}
0=\mathrm{Coeff}_{a^{i}}\big([(a+d),g]\big) & = & \sum_{m}\big((1-q^{-m})r_{i-1,m}\big)+\label{eq:coeff}\\
 & + & \sum_{m}(1-q^{m})r_{i+1,m}+\nonumber \\
 & + & \sum_{m}(q^{-2(i-1)-1}-q^{m+1})bc\cdot r_{i+1,m}\nonumber 
\end{eqnarray}
where and $r_{i,m}$ is the $m$-th homogeneous term of $r_{i}\in\mathbb{C}[b,c]$
and $\mathrm{Coeff}_{a^{i}}$ stands for the element in $\mathbb{C}[b,c]$
such that $a^{i}\cdot\mathrm{Coeff}_{a^{i}}(x)$ is a summand of $x$
when it is decomposed in the monomial basis. The degree $k$ part
of the right hand side is 
\[
(1-q^{-k})r_{i-1,k}+(1-q^{k})r_{i+1,k}+(q^{1-2i}-q^{k-1})bc\cdot r_{i+1,k-2}\qquad\textrm{if }k\geq2
\]

\[
(1-q^{-1})r_{i-1,1}+(1-q^{1})r_{i+1,1}\qquad\textrm{if }k=1
\]
for all $1\leq i\leq\alpha-1$. Hence $r_{i+1}$ determines $r_{i-1}$
(using that $q$ is not a root of unity) except for the constant term
$r_{i-1,0}$ which has zero coefficient in \ref{eq:coeff} for all
$k$. 

We prove that $\deg s_{j+1}\leq\deg s_{j-1}-2$ for all $j\geq1$
where $\deg$ stands for the total degree of $\mathbb{C}[b,c]$. Analogously
to \ref{eq:coeff}, one can deduce the following by \ref{eq:commutator n=00003D2 p(b,c)*d}:
\begin{eqnarray*}
0=\mathrm{Coeff}_{d^{j}}\big([(a+d),g]\big) & = & \sum_{m}\big((q^{-m}-1)s_{j-1,m}\big)+\\
 & + & \sum_{m}(q^{m}-1)s_{j+1,m}\\
 & + & \sum_{m}(q^{m+1}-q^{-2(j-1)-1})bc\cdot s_{j+1,m}
\end{eqnarray*}
The degree $k$ part of the right hand side is

\begin{equation}
(q^{-k}-1)s_{j-1,k}+(q^{k}-1)s_{j+1,k}+(q^{k-1}-q^{1-2j})bc\cdot s_{j+1,k-2}\qquad\textrm{if }k\geq2\label{eq:determining s}
\end{equation}

\[
(q^{-1}-1)s_{j-1,1}+(q^{1}-1)s_{j+1,1}\qquad\textrm{if }k=1
\]
for all $1\leq j\leq\beta-1$. Note that $q^{k-1}-q^{1-2j}=0$ can
never happen for $k\geq2$. If $s_{j+1}=0$ then the statement is
empty. If $s_{j+1}\neq0$ then for $k=2+\deg s_{j+1}\geq2$, we have
$s_{j+1,k}=0$ but $s_{j+1,k-2}=s_{j+1,\deg s_{j+1}}\neq0$ hence
\ref{eq:determining s} gives $s_{j-1,k}\neq0$. So $\deg s_{j+1}\leq\deg s_{j-1}-2$.

Now, assume that $\alpha$ is even. By the previous paragraphs, the
scalars $r_{\alpha}$, $r_{\alpha-2,0},\dots,$ $r_{2,0}$ and $u_{0}$
determine all the polynomials $r_{\alpha},r_{\alpha-2},r_{\alpha-4},\dots,r_{2}$
and $u$. We prove that they also determine the $s_{j}$'s. Starting
from $u=s_{0}$ one can obtain $s_{j+1}$ by $s_{j-1}$. Indeed, since
$\deg s_{j+1}\leq\deg s_{j-1}-2$, if $\deg s_{j-1}\leq1$ then $s_{j+1}=0$,
and similarly, for $k=\deg s_{j-1}\geq2$ we have $s_{j+1,k-1}=0$
and \ref{eq:determining s} gives $(q^{-k}-1)s_{j-1,k}=-(q^{k-1}-q^{1-2j})bc\cdot s_{j+1,k-2}$.
Then, recursively for $k$, if $s_{j-1,k}$ and $s_{j+1,k}$ are given,
by \ref{eq:determining s} they determine $s_{j+1,k-2}$, using that
$q$ is not a root of unity.

If $\alpha$ is odd, then by \ref{eq:commutator n=00003D2 a*p(b,c)}
one can obtain the following for the summand of $[(a+d),g]$ that
does not contain $a$ and $d$ when decomposed in the given basis:
\begin{eqnarray*}
0=\mathrm{Coeff}_{1}\big([(a+d),g]\big) & = & \sum_{m}\Big((1-q^{m})r_{1,m}+(q-q^{m+1})bc\cdot r_{1,m}+\\
 & + & (q^{m}-1)s_{1,m}+(q^{m+1}-q)bc\cdot s_{1,m}\Big)
\end{eqnarray*}
The homogeneous components of degree $k$ are
\begin{equation}
(1-q^{k})r_{1,k}+(q-q^{k-1})bc\cdot r_{i+1,k-2}+(q^{k}-1)s_{1,k}+(q^{k+1}-q)bc\cdot s_{1,k-2}\qquad\textrm{if }k\geq2\label{eq:determining s, odd}
\end{equation}

\[
(1-q)r_{1,1}+(q-1)s_{1,1}\qquad\textrm{if }k=1
\]
Hence, $r_{\alpha},r_{\alpha-2,0},\dots,r_{1,0}$ determine not only
$r_{i}$ for $1\leq i\leq\alpha$ but also $s_{1}$ by \ref{eq:determining s, odd}
applied for $k=\deg s_{1}+2$ and the same recursive argument as in
the even case. Then, similarly, $s_{j+1}$ is unique by $s_{j-1}$
for all $2\leq j\leq\beta-1$ and the statement follows.
\end{proof}

\section{Proof of the main result\label{sec:Proof}}

In \cite{DL1}, to verify that the subalgebra of cocommutative elements
in $A_{n}:=\mathcal{O}_{q}\big(M_{n}(\mathbb{C})\big)$ is generated
by the $\sigma_{i}$'s, they proved that the natural surjection 
\[
\eta:\mathcal{O}_{q}\big(M_{n}(\mathbb{C})\big)\to\mathbb{C}[t_{1},\dots,t_{n}]\qquad x_{i,j}\mapsto\delta_{i,j}t_{i}
\]
restricted to the subalgebra of cocommutative elements $\mathcal{O}_{q}\big(M_{n}(\mathbb{C})\big)^{\mathrm{coc}}$
is an isomorphism and its image is the subalgebra of symmetric polynomials
$D_{n}^{S_{n}}$ where $D_{n}:=\mathbb{C}[t_{1},\dots,t_{n}]$. We
use the same plan to prove that it is also the centralizer of $\sigma_{1}\in\mathcal{O}_{q}\big(M_{n}(\mathbb{C})\big)$.

For this purpose, we will need the following intermediate quotient
algebra between $A_{n}$ and $D_{n}$:
\[
B_{2,n}:=A_{n}/(x_{1,j},\, x_{i,1}\ |\ 2\leq i,j\leq n)
\]
Let us denote the corresponding natural surjection by $\varphi:A_{n}\to B_{2,n}$.
Since $\mathrm{Ker}\,\eta\subseteq\mathrm{Ker}\,\varphi$ by their
definition, $\eta$ can be factored through $\varphi$. So our setup
is: 
\begin{equation}
\xymatrix{C(\sigma_{1})\ar@{}[r]|\subseteq & A_{n}\ar@{->>}[r]^{\varphi} & B_{2,n}\ar@{->>}[r]^{\delta} & D_{n}}
\label{eq:setup}
\end{equation}
where $\eta=\delta\circ\varphi$ and $C(\sigma_{1})$ denotes the
centralizer of $\sigma_{1}$ in $A_{n}$. The structure of $B_{2,n}$
is quite simple: $B_{2,n}\cong A_{n-1}[t]$ by the map $x_{i,j}\mapsto x_{i-1,j-1}$
for $i,j\geq2$ and $x_{1,1}\mapsto t$. One can check that it is
indeed an isomorphism since $x_{1,1}$ commutes with the elements
of $\mathbb{C}\langle x_{1,1},x_{i,j}\ |\ i,j\geq2\rangle$ modulo
$\mathrm{Ker}\,\varphi$. 

These algebras are $\mathbb{N}$-graded algebras using the total degree
of $A_{n}$, but we can also endow them by a filtration that is not
the corresponding filtration of the grading. Namely, for each $d\in\mathbb{N}$
let $\mathcal{A}^{d}$ be the subspace of $A_{n}$ that is generated
by the monomials in which $x_{1,1}$ appears at most $d$ times, i.e.
it is spanned by the ordered monomials of the form $x_{1,1}^{i}m$
where $i\leq d$ and $m$ is an ordered monomial in the variables
$x_{i,j}$, $(i,j)\neq(1,1)$. One can check that this is indeed a
filtration: they are linear subspaces such that $\cup_{d}\mathcal{A}^{d}=A_{n}$
and $\mathcal{A}^{d}\cdot\mathcal{A}^{e}\subseteq\mathcal{A}^{d\cdot e}$
for all $d,e\in\mathbb{N}$. As $C(\sigma_{1})$ is a subalgebra of
$A_{n}$, we get an induced filtration $\mathcal{C}^{d}=\mathcal{A}^{d}\cap C(\sigma_{1})$
($d\in\mathbb{N}$) on $C(\sigma_{1})$, and similarly, an induced
filtration $\mathcal{B}^{d}:=\varphi(\mathcal{A}^{d})$ ($d\in\mathbb{N}$)
on $B_{2,n}$ and $\mathcal{D}^{d}:=\delta\circ\varphi(\mathcal{A}^{d})$
($d\in\mathbb{N})$ on $D_{n}$.

\medskip{}

\begin{proof}[Proof of Theorem \ref{thm:The-centralizer-of-the-trace}]
 We prove the statement by induction on $n$. The statement is verified
for $\mathcal{O}_{q}\big(SL_{2}(\mathbb{C})\big)$ in Section \ref{sec:Case-of n=00003D2}
so by Proposition \ref{prop:implications} the case $n=2$ is proved.
Now, assume that $n\geq3$. We shall prove that
\begin{itemize}
\item $(\delta\circ\varphi)|_{C(\sigma_{1})}:C(\sigma_{1})\to D_{n}$ is
injective, and
\item the image $(\delta\circ\varphi)\big(C(\sigma_{1})\big)$ is in $D_{n}^{S_{n}}$.
\end{itemize}
This means that the restriction of $\delta\circ\varphi$ to $C(\sigma_{1})$
yields an isomorphism with $D_{n}^{S_{n}}$, since by \cite{DL1},
$C(\sigma_{1})\ni\sigma_{i}$ for all $i=1,\dots,n$ and $\delta\circ\varphi$
restricts to an isomorphism between $\mathbb{C}\langle\sigma_{1},\dots,\sigma_{n}\rangle$
and $D_{n}^{S_{n}}$. 

\textbf{First part, step 1:} First, we show that it is enough to prove
that $\mathrm{gr}(\delta\circ\varphi)$ restricted to $\mathrm{gr}\big(C(\sigma_{1})\big)$
is injective to get the injectivity of $\delta\circ\varphi$ on $C(\sigma_{1})$.
Apply $\mathrm{gr}$ to the filtered algebras in our setup presented
in Diagram \ref{eq:setup}. It gives 
\begin{equation}
\xymatrix{\mathrm{gr}\big(C(\sigma_{1})\big)\ar@{}[r]|\subseteq & \mathrm{gr}(A_{n})\ar@{->>}[r]^{\mathrm{gr}(\varphi)} & \mathrm{gr}(B_{2,n})\ar@{->>}[r]^{\mathrm{gr}(\delta)} & \mathrm{gr}(D_{n})}
\label{eq:setup2}
\end{equation}
The surjectivity of the maps follow by $\varphi(\mathcal{A}^{d})=\mathcal{B}^{d}$
and $\delta(\mathcal{B}^{d})=\mathcal{D}^{d}$. Assuming that $\mathrm{gr}(\delta\circ\varphi)$
restricted to $\mathrm{gr}\big(C(\sigma_{1})\big)$ is injective,
we get the injectivity of $(\delta\circ\varphi)|_{\mathcal{C}^{0}}$,
moreover, we can apply an induction on $d$ using the $5$-lemma in
the following commutative diagram of vector spaces for all $d\geq1$:
\[
\xymatrix{0\ar[r] & \mathcal{C}^{d-1}\ar@{^{(}->}[d]^{\delta\circ\varphi\mid_{\mathcal{C}^{d-1}}}\ar[r] & \mathcal{C}^{d}\ar[d]^{\delta\circ\varphi\mid_{\mathcal{C}^{d}}}\ar[r] & \mathcal{C}^{d}/\mathcal{C}^{d-1}\ar@{^{(}->}[d]^{\mathrm{gr}(\delta\circ\varphi|_{C(\sigma_{1})})_{d}}\ar[r] & 0\\
0\ar[r] & \mathcal{D}^{d-1}\ar[r] & \mathcal{D}^{d}\ar[r] & \mathcal{D}^{d}/\mathcal{D}^{d-1}\ar[r] & 0
}
\]
where the rows are exact by definition and $\mathrm{gr}(\delta\circ\varphi|_{C(\sigma_{1})})_{d}$
and $\delta\circ\varphi\mid_{\mathcal{C}^{d-1}}$ are injective by
the assumption and the induction hypothesis. Therefore, $\delta\circ\varphi$
is injective on $\cup_{d}\mathcal{C}^{d}=C(\sigma_{1})$. 

Notice that $B_{2,n}$ and $D_{n}$ are not only filtered by the $\varphi(x_{1,1})$
and $t_{1}$ degrees but they are also graded as $B_{2,n}\cong A_{n-1}[t]$
and $D_{n}\cong D_{n-1}[t]$ by $t_{1}\mapsto t$ and $t_{i}\mapsto t_{i-1}\in D_{n-1}$
($i\geq2$). Hence, we will use the natural identifications of graded
algebras $B_{2,n}\cong\mathrm{gr}(B_{2,n})$ and $\mathrm{gr}(D_{n})\cong D_{n}$
(and so $\mathrm{gr}(\delta)$ is just $\delta$).

\textbf{Step 2:} We prove that the image of the map $\mathrm{gr}(\varphi)$
restricted to $\mathrm{gr}\big(C(\sigma_{1})\big)$ is in $C\big(\varphi(\sigma_{1})\big)\subseteq B_{2,n}$.
Here, $C\big(\varphi(\sigma_{1})\big)$ is a graded subalgebra of
$B_{2,n}$ since $\varphi(\sigma_{1})$ is a sum of a central element
$\varphi(x_{1,1})$ and of the elements $\varphi(x_{2,2}),\dots,\varphi(x_{n,n})$
(that are homogeneous of degree zero) so $C\big(\varphi(\sigma_{1})\big)=C\big(\varphi(x_{2,2}+\dots+x_{n,n})\big)$
is homogeneous. The proof of this step is clear: For an $h\in\mathcal{C}^{d}\subseteq\mathcal{A}^{d}$
we have $0=\varphi\big([\sigma_{1},h]\big)=\big[\varphi(\sigma_{1}),\varphi(h)\big]$,
hence $\mathrm{gr}(\varphi)(h+\mathcal{C}^{d-1})\in C\big(\varphi(\sigma_{1})\big)$. 

\textbf{Step 3:} We prove the injectivity of $\mathrm{gr}(\delta)$
restricted to $C\big(\varphi(\sigma_{1})\big)$ by the induction.
First, note that $C\big(\varphi(\sigma_{1})\big)\cong C_{A_{n-1}}(\sigma_{1})[t]$
using the isomorphism $B_{2,n}\cong A_{n-1}[t]$. Then, by the induction
hypothesis, 
\[
C_{A_{n-1}}(\sigma_{1})=\mathbb{C}\big\langle\sigma_{1}(A_{n-1}),\dots,\sigma_{n-1}(A_{n-1})\big\rangle
\]
Therefore, $C\big(\varphi(\sigma_{1})\big)=\mathbb{C}\langle\sigma_{1}(B_{2,n}),\dots,\sigma_{n-1}(B_{2,n}),\varphi(x_{1,1})\rangle$
where $\sigma_{i}(B_{2,n})$ is defined as the image of $\sigma_{i}(A_{n-1})$
under the above mentioned isomorphism. For these elements, we have
$\delta\big(\sigma_{i}(B_{2,n})\big)=s_{i}(t_{2},\dots,t_{n})$ where
$s_{i}(t_{2},\dots,t_{n})$ is the $i$-th elementary symmetric polynomial
in the variables $t_{2},\dots,t_{n}$. Hence, $\delta$ is indeed
injective by the fundamental theorem of symmetric polynomials. Now,
it is enough to prove the injectivity of $\mathrm{gr}(\varphi)$ restricted
to $C(\sigma_{1})$ to get the injectivity of $\delta\circ\varphi$
by Step 1 and 2.

\textbf{Step 4:} For $\mathrm{ad}\sigma_{1}:A_{n}\to A_{n}$, $h\mapsto[\sigma_{1},h]$,
we have $C(\sigma_{1})=\mathrm{Ker}(\mathrm{ad}\sigma_{1})$ by definition.
Although $\mathrm{ad}\sigma_{1}$ is not a morphism of algebras but
a derivation of degree 1, we can still take 
\[
\mathrm{Ker}\big(\mathrm{gr}(\mathrm{ad}\sigma_{1})\big):=\big\{(h_{d})_{d\in\mathbb{N}}\in\mathrm{gr}(A_{n})\ |\ \sigma_{1}h_{d}-h_{d}\sigma_{1}+\mathcal{A}^{d}=0\in\mathcal{A}^{d+1}/\mathcal{A}^{d}\big\}
\]
where $\mathrm{gr}(\mathrm{ad}\sigma_{1})$ is understood as a map
of graded vector spaces. Then, we can extend Diagram \ref{eq:setup2}
as:
\[
\xymatrix{\mathrm{gr}(A_{n})\ar@{->>}[r]^{\mathrm{gr}(\varphi)} & B_{2,n}\ar@{->>}[r]^{\mathrm{gr}(\delta)} & D_{n}\\
\mathrm{Ker}\big(\mathrm{gr}(\mathrm{ad}\sigma_{1})\big)\ar@{}[u]|\bigcup & C_{B_{2,n}}(\sigma_{1})\ar@{}[u]|\bigcup\\
\mathrm{gr}\big(C(\sigma_{1})\big)\ar@{}[u]|\bigcup\ar[ru]
}
\]
Naturally, $\mathrm{gr}\big(C(\sigma_{1})\big)\subseteq\mathrm{Ker}\big(\mathrm{gr}(\mathrm{ad}\sigma_{1})\big)$
since $\sigma_{1}h_{d}-h_{d}\sigma_{1}=0\in A_{n}$ implies $\sigma_{1}h_{d}-h_{d}\sigma_{1}\in\mathcal{A}^{d}$.

We give an explicit description of $\mathrm{Ker}\big(\mathrm{gr}(\mathrm{ad}\sigma_{1})\big)$.
Observe that 
\[
\mathrm{gr}(A_{n})\cong\bigoplus_{d\in\mathbb{N}}y^{d}\mathbb{C}\langle x_{i,j}\ |\ (i,j)\neq(1,1)\rangle
\]
 where $y$, the image of $x_{1,1}$, commutes with every $x_{i,j}$
for $2\leq i,j\leq n$ and $q$-commutes with $x_{1,j}$ and $x_{i,1}$
for all $i,j\geq2$. Indeed, by the monomial basis of $A_{n}$ (see
Section \ref{sec:Prerequisites}) we get the direct sum decomposition,
moreover, the only defining relations involving $x_{1,1}$ are $x_{1,1}x_{1,j}=qx_{1,j}x_{1,1}$,
$x_{1,1}x_{i,1}=qx_{i,1}x_{1,1}$ and $x_{1,1}x_{i,j}=x_{i,j}x_{1,1}+(q-q^{-1})x_{i,1}x_{1,j}$
that reduce to $q$-commutativity of $y$ and commutativity of $y$
with the appropriate elements. The argument also gives that the image
of the monomial basis of $A_{n}$ is a monomial basis in $\mathrm{gr}(A_{n})$. 

In particular, we get that 
\[
\mathrm{Ker}\big(\mathrm{gr}(\mathrm{ad}\sigma_{1})\big)\cong\bigoplus_{d\in\mathbb{N}}y^{d}\mathbb{C}\langle x_{i,j}\ |\ 2\leq i,j\leq n\rangle
\]
 by the same isomorphism. Indeed, for an element $x_{1,1}^{d}m\in\mathcal{A}^{d}$
where $m$ is an ordered monomial in the variables $x_{i,j}$ ($(i,j)\neq(1,1)$),
we have
\[
\mathrm{gr}(\mathrm{ad}\sigma_{1})(x_{1,1}^{d}m+\mathcal{A}^{d-1})=x_{1,1}\cdot x_{1,1}^{d}m-x_{1,1}^{d}m\cdot x_{1,1}+\mathcal{A}^{d}
\]
since $x_{i,i}\cdot\mathcal{A}^{d}\subseteq\mathcal{A}^{d}$ and $\mathcal{A}^{d}\cdot x_{i,i}\subseteq\mathcal{A}^{d}$
for all $i\geq2$. Then, by the above mentioned $q$-commutativity
relations, we get $(1-q^{-c(m)})x_{1,1}^{d+1}m+\mathcal{A}^{d}$ where
$c(m)$ stands for the sum of exponents of the $x_{1,j}$'s and $x_{i,1}$'s
($2\leq i,j\leq n$) appearing in $m$. The result is a monomial basis
element in $\mathcal{A}^{d+1}/\mathcal{A}^{d}\subseteq\mathrm{gr}(A_{n})$.
For different monomials $x_{1,1}^{d}m$ and $x_{1,1}^{d'}m'$ we get
different monomials $x_{1,1}^{d+1}m$ and $x_{1,1}^{d'+1}m'$ so $\mathrm{gr}(\mathrm{ad}\sigma_{1})$
is diagonal in the monomial basis of $\mathrm{gr}(A_{n})$ with the
scalars $(1-q^{-c(m)})$. Hence, its kernel is $\{x_{11}^{d}m+\mathcal{A}^{d-1}\ |\ d\in\mathbb{N},\ c(m)=0\}$
since $q$ is not a root of unity, as we stated. Therefore, we get
that $\mathrm{Ker}\big(\mathrm{gr}(\mathrm{ad}\sigma_{1})\big)\cong A_{n-1}[t]$
using $y\mapsto t$ and $x_{i,j}\mapsto x_{i-1,j-1}$ since $y$ commutes
with every $x_{i,j}$ for $2\leq i,j\leq n$.

Now, the injectivity part of the theorem follows: the isomorphisms
$B_{2,n}\cong A_{n-1}[t]$ and $\mathrm{Ker}\big(\mathrm{gr}(\mathrm{ad}\sigma_{1})\big)\cong A_{n-1}[t]$
established in step 4 are compatible, meaning that $\mathrm{gr}(\varphi)$
composed with them on the appropriate sides is $\mathrm{id}_{A_{n-1}[t]}$.
In particular, $\mathrm{gr}(\varphi)$ restricted to $\mathrm{gr}\big(C(\sigma_{1})\big)\subseteq\mathrm{Ker}\big(\mathrm{gr}(\mathrm{ad}\sigma_{1})\big)$
is injective. By step 3, $\mathrm{gr}(\delta)$ restricted to $C\big(\varphi(\sigma_{1})\big)$
is also injective, so the composition $\delta\circ\mathrm{gr}(\varphi)=\mathrm{gr}(\delta\circ\varphi)$
is injective as well, using step 2. By step 1, this means that $\delta\circ\varphi$
is injective.

\textbf{Second part:} To prove $\eta\big(C(\sigma_{1})\big)\subseteq D_{n}^{S_{n}}$,
consider the following commutative diagram:
\[
\xymatrix{A_{n}\ar@{->>}[r]^{\varphi} & B_{2,n}\ar@{->>}[r]^{\delta} & D_{n}\\
C(\sigma_{1})\ar@{}[u]|\bigcup\ar[r] & C\big(\varphi(\sigma_{1})\big)\ar@{}[u]|\bigcup\ar[r] & D_{n}^{S_{n-1}}\ar@{}[u]|\bigcup
}
\]
where $S_{n-1}$ acts on $D_{n}$ by permuting $t_{2},\dots,t_{n}$.
The diagram implicitly states that $\varphi\big(C(\sigma_{1})\big)\subseteq C\big(\varphi(\sigma_{1})\big)$
(which is clear) and that $\delta\big(C\big(\varphi(\sigma_{1})\big)\big)\subseteq D_{n}^{S_{n-1}}$.
The latter follows by the induction hypothesis for $n-1$: it gives
that $C\big(\varphi(\sigma_{1})\big)=\mathbb{C}\langle\sigma_{1}(B_{2,n}),\dots,\sigma_{n-1}(B_{2,n}),\varphi(x_{1,1})\rangle$
by $B_{2,n}\cong A_{n-1}[t]$ and since $\delta(\varphi(x_{1,1}))=t_{1}$
and $\delta\big(\sigma_{i}(B_{2,n})\big)=s_{i}(t_{2},\dots,t_{n})$,
the $i$-th elementary symmetric polynomial in the variables $t_{2},\dots,t_{n}$,
we get that $(\delta\circ\varphi)\big(C(\sigma_{1})\big)$ is symmetric
in $t_{2},\dots,t_{n}$. 

To prove symmetry in $t_{1},\dots,t_{n-1}$ too, consider the isomorphism
$\gamma:\mathcal{O}_{q}\big(M_{n}(\mathbb{C})\big)\cong\mathcal{O}_{q^{-1}}\big(M_{n}(\mathbb{C})\big)$
given by $x_{i,j}\leftrightarrow x'_{n+1-i,n+1-j}$ where $x'_{i,j}$
denotes the variables in $\mathcal{O}_{q^{-1}}\big(M_{n}(\mathbb{C})\big)$.
This is indeed an isomorphism: interpreted in the free algebra it
maps the defining relations of $\mathcal{O}_{q}\big(M_{n}(\mathbb{C})\big)$
to the defining relations of $\mathcal{O}_{q^{-1}}\big(M_{n}(\mathbb{C})\big)$.
It also maps $\sigma_{1}\in\mathcal{O}_{q}\big(M_{n}(\mathbb{C})\big)$
into the $\sigma_{1}$ of $\mathcal{O}_{q^{-1}}\big(M_{n}(\mathbb{C})\big)$
denoted by $\sigma_{1}'$. Moreover, $\overline{\gamma}\circ\eta=\eta'\circ\gamma$
where $\overline{\gamma}:D_{n}\to D_{n}$, $t_{i}\mapsto t_{n+1-i}$
($i=1,\dots,n$) and $\eta':\mathcal{O}_{q^{-1}}\big(M_{n}(\mathbb{C})\big)\to\mathbb{C}[t_{1},\dots,t_{n}]$,
$x'_{i,j}\mapsto t_{i}\delta_{i,j}$ is the $\eta$($=\delta\circ\varphi$)
of $\mathcal{O}_{q^{-1}}\big(M_{n}(\mathbb{C})\big)$. Hence, $(\overline{\gamma}\circ\eta)\big(C(\sigma_{1})\big)=\eta'\big(C(\sigma'_{1})\big)$
as $C(\sigma_{1}')$ is symmetric under $\overline{\gamma}$. Applying
the previous argument on $\mathcal{O}_{q^{-1}}\big(M_{n}(\mathbb{C})\big)$
gives that $\eta'\big(C(\sigma'_{1})\big)\subseteq D_{n}^{S_{n-1}}$
where $S_{n-1}$ still acts by permuting $t_{2},\dots,t_{n}$. Hence,
$\eta\big(C(\sigma_{1})\big)$ is symmetric in $t_{1},\dots,t_{n-1}$
too so we got that $\eta\big(C(\sigma_{1})\big)$ is symmetric in
all the variables $t_{1},\dots,t_{n}$ by $n\geq3$. 
\end{proof}

\medskip{}

\begin{rem}
In fact, the proof of injectivity of $\eta$ is valid in the case
$n=2$ too, but the symmetry argument used to prove $\eta\big(C(\sigma_{1})\big)\subseteq\mathbb{C}[t_{1},\dots,t_{n}]^{S_{n}}$
does not give anything if $n=2$. That is why we had to start the
induction at $n=2$ instead of $n=1$.
\end{rem}

\begin{rem}
\label{rem:final remark automorphism}As it is discussed in \cite{DL1},
the set of cocommutative elements in $\mathcal{O}_{q}\big(GL_{n}(\mathbb{C})\big)$
is the ring of invariants under the right coaction
\[
\alpha:\mathcal{O}_{q}\big(GL_{n}(\mathbb{C})\big)\to\mathcal{O}_{q}\big(GL_{n}(\mathbb{C})\big)\otimes\mathcal{O}_{q}\big(GL_{n}(\mathbb{C})\big)
\]
\[
a\mapsto\sum a_{(2)}\otimes a_{(3)}S(a_{(1)})
\]
where we use Sweedler's notation. Although this coaction does not
agree with the right adjoint coaction 
\[
a\mapsto\sum a_{(2)}\otimes S(a_{(1)})a_{(3)}
\]
of the Hopf algebra $\mathcal{O}_{q}\big(GL_{n}(\mathbb{C})\big)$
(that is also mentioned in the referred article) but they differ only
by the automorphism $S^{2}$. Hence, by Theorem \ref{thm:The-subalgebra},
the invariants of the right adjoint coaction also form a maximal commutative
subalgebra.

We get other maximal commutative subalgebras by applying automorphisms
of the algebras $\mathcal{O}_{q}\big(GL_{n}(\mathbb{C})\big)$, $\mathcal{O}_{q}\big(M_{n}(\mathbb{C})\big)$
or $\mathcal{O}_{q}\big(SL_{n}(\mathbb{C})\big)$, though they do
not have many automorphisms: it is proved in \cite{Y2} establishing
a conjecture stated in \cite{LL} that the automorphism group of $\mathcal{O}_{q}\big(M_{n}(\mathbb{C})\big)$
is generated by the transpose operation on the variables and a torus
that acts by rescaling the variables $x_{i,j}\mapsto c_{i}d_{j}x_{i,j}$
($c_{i},d_{j}\in\mathbb{C}^{\times}$).\end{rem}

\end{document}